\documentclass[12pt, reqno, a4paper]{amsart}
\usepackage{amsmath,mathrsfs,bbm}
\usepackage{amssymb}
\usepackage{color}
\usepackage{lmodern}
\usepackage{dsfont}
\usepackage{todonotes}

\usepackage{hyperref}

\newcommand\NoBlackBoxes{\global\overfullrule0pt}
\NoBlackBoxes

\usepackage{enumitem}
\setitemize{noitemsep,topsep=0pt,parsep=0pt,partopsep=0pt}

\newcommand{\eps}{\varepsilon}

\newcommand{\N}{\mathbb{N}}

\renewcommand{\P}{\mathbb{P}}

%\mathrm commands

\newcommand{\Var}{\mathop{\mathrm{Var}}\nolimits}

%\mathcal commands

\newcommand{\eee}{{\rm e}}

%\newcommand{\vep}{\varepsilon}

%    Turn off series logo and copyright note
%\makeatletter
%\let\serieslogo@\relax
%\let\@setcopyright\relax
%
%% For a convenient note at the top of the first page
%%\def\serieslogo@{\vtop to\z@{\parindent\z@ \fontsize{6}{7\p@}\selectfont
%%Preliminary Version, April 9, 1997\par\vss}}
%
\parindent0em
\parskip1.8ex
\textwidth16cm
\textheight21.5cm

\topmargin0.2cm
\evensidemargin-0.3cm
\oddsidemargin-0.3cm
\parindent 0cm

\allowdisplaybreaks
%Theorems***************************************
\theoremstyle{plain}
\newtheorem{theorem}{Theorem}[section]

\newtheorem{corollary}[theorem]{Corollary}

\theoremstyle{definition}
\newtheorem{definition}[theorem]{Definition}

\theoremstyle{remark}
\newtheorem{remark}[theorem]{Remark}

\newtheorem{rems}[theorem]{Remarks}

\renewcommand{\P}{{\mathbb{P}}}
\newcommand{\E}{{\mathbb{E}}}
\newcommand{\R}{{\mathbb{R}}}

\renewcommand{\epsilon}{\varepsilon}
\renewcommand{\phi}{\varphi}

\numberwithin{equation}{section}

\begin{document}

\setcounter{page}{1}

\title[Propagation of chaos in the Random field Curie-Weiss model]{Propagation of chaos in the Random field Curie-Weiss model}
%
%\author[Jonas Jalowy]{Jonas Jalowy}
%\address[Jonas Jalowy]{Fachbereich Mathematik und Informatik,
%Universit\"at M\"unster,
%Einsteinstra\ss e 62,
%48149 M\"unster,
%Germany}
%
%\email[Jonas Jalowy]{jjalowy@uni-muenster.de}
%
\author[Zakhar Kabluchko]{Zakhar Kabluchko}
\address[Zakhar Kabluchko]{Fachbereich Mathematik und Informatik,
Universit\"at M\"unster,
Einsteinstra\ss e 62,
48149 M\"unster,
Germany}

\email[Zakhar Kabluchko]{zakhar.kabluchko@uni-muenster.de}

\thanks{Research of both authors was
funded by the Deutsche Forschungsgemeinschaft (DFG, German Research Foundation) under Germany 's Excellence Strategy
EXC 2044-390685587, Mathematics M\"unster: Dynamics-Geometry-Structure}

\author[Matthias L\"owe]{Matthias L\"owe}
\address[Matthias L\"owe]{Fachbereich Mathematik und Informatik,
Universit\"at M\"unster,
Einsteinstra\ss e 62,
48149 M\"unster,
Germany}

\email[Matthias L\"owe]{maloewe@math.uni-muenster.de}

%\thanks{Research was
%funded by the Deutsche Forschungsgemeinschaft (DFG, German Research Foundation) under Germany 's Excellence Strategy
%EXC 2044-390685587, Mathematics M\"unster: Dynamics-Geometry-Structure}
%\thanks{Research of the second author was supported by ??}

\date{\today}

\subjclass[2000]{Primary: 82B44; Secondary: 82B20}

\keywords{Ising model, Curie-Weiss model, propagation of chaos}

\newcommand{\wlim}{\mathop{\hbox{\rm w-lim}}}
\newcommand{\na}{{\mathbb N}}
\newcommand{\re}{{\mathbb R}}

\newcommand{\vep}{\varepsilon}
\newcommand{\be}{\begin{equation}}
\newcommand{\ee}{\end{equation}}

\begin{abstract}
We prove propagation of chaos in the Random field mean-field Ising model, also known ad the Random field Curie-Weiss model.
We show that in the paramagnetic phase, i.e.\ in the regime where temperature and distribution of the external field admit a
unique minimizer of the expected Helmholtz free energy, propagation of chaos holds. By the latter we mean that the finite-dimensional
marginals of the Gibbs measure converge towards a product measure with the correct expectation as the system size goes to infinity.
This holds independently of whether the system is in a high-temperature phase or at a phase transition point. If the
Helmholtz free energy possesses several minima, there are several possible equilibrium measures. In this case, we show that the
system picks one of them at random (depending on the realization of the random external field) and propagation of chaos with respect to
a product measure with the same marginals as the one randomly picked
holds true. We illustrate our findings in a simple example.
\end{abstract}

\maketitle

\section{Introduction and main results}
This note is devoted to propagation of chaos in one of the easiest disordered mean-field models, the Random field Curie-Weiss model.
Propagation of chaos was introduced by Kac \cite{Kac_foundations} in attempt to justify Boltzmann's ''Stoßzahlenansatz'' that had led to
fierce debates in the physics community since it seems to contradict microscopic reversibility. Kac introduced the idea that for a
system
that evolves in time chaos should be propagated, i.e.\ if at time $0$ the finite marginal distributions of a system are product measures in the
thermodynamic limit, this should still be true for the time-evolved system. For mean-field Gibbs measures this was shown to
follow from the fact the extremal Gibbs measures locally look like product measures, i.e.\
that any finite subset of spins forms a family of independent random variables in the thermodynamic limit. This is even true, if
one considers marginals that grow sufficiently slow with the number of spins \cite[Theorem 3]{BAZ_chaos}.

We will analyze exactly this property for the Random field Curie-Weiss model. This is not itself a mean-field model in the sense of
\cite{BAZ_chaos}, since its energy function cannot be written as a functional of the empirical mean of the spin-values, nor of the
empirical measure. However, it is one of the easiest disordered variants of one of the easiest models of ferromagnetism. To show
propagation of chaos for this model, we will introduce a new technique, which also might be useful for deterministic, i.e.\
non-disordered models.

To be more precise consider the following sequence of random functions on the set $\{-1, +1\}^N, N \in \N$:
$$
H_N(\sigma):= -\frac 1 {2N} \sum_{i,j=1}^N \sigma_i \sigma_j -\sum_{i=1}^N h_i \sigma_i, \qquad \sigma=(\sigma_i)_{i=1}^N \in \{-1, +1\}^N.
$$
Here $(h_i)$ are i.i.d. random variables, of which we assume that $\E_h h_i^2=\E h_i^2$ exists.
(We will always denote the probability measure governing the randomness of the $(h_i)$ by $\P_h$ and the corresponding
expectation by $\mathbb E_h$; if there is no danger of confusion we will drop the index $h$.)

With $(H_N)$ we associate a sequence of Gibbs measures
$$
\mu_N(\sigma):= \frac{\exp\left(-\beta H_N(\sigma)\right)}{Z_N}, \qquad \sigma\in \{-1, +1\}^N,
$$
where, as usual, $\beta>0$ is the inverse temperature and
$$Z_N=\sum_{\sigma'}\exp\left(-\beta H_N(\sigma')\right)$$
is the partition function. Note that $\mu_N$ and $Z_N$ are random probability measures and random constants, respectively, through
their dependence on $(h_i)$. A special case of this situation is the classical Curie-Weiss model, where $h_i \equiv h$ is a deterministic
number (see e.g.\ \cite{EllisEntropyLargeDeviationsAndStatisticalMechanics,bovierbook,Velenik_book} for a survey of the vast number
of results obtained in this model).

These models have been studied intensively over the
last decades, see e.g.\ \cite{Aha78,SW85,MPF91,AGZ92,MP98,FMP00,BBI09,LM12,LMT13, CK18}, the references
therein and many others. Many of the above articles assume the
local external fields to be symmetrically Bernoulli distributed or to be bounded.
We will not need to restrict ourselves to these assumptions, even though, as we will see in the example in Section 4,
already symmetrically distributed Bernoulli variables $(h_i)$ provide a much richer phase diagram than the usual Curie-Weiss model.

As mentioned above, in this note we will focus on the so-called propagation of chaos. That means that for $k\in \N$ not depending on $N$
we want to approximate the $k$-marginal of $\mu_N$, which we call $\mu_{N,k}$ by a product measure. Note that, other than in the
classical
Curie-Weiss model, the so-called spin variables $(\sigma_i)$ are not exchangeable, hence it does matter which $k$-marginal
$(\sigma_{i_1}, \ldots, \sigma_{i_k})$ we consider.
On the other hand, the dependence on the sites $i_1, \ldots, i_k$ is just via the realizations $h_{i_1}, \ldots, h_{i_k}$, so we may
as well just take $(i_1, \ldots, i_k)=(1, \ldots,k)$ to get the full picture. $\mu_{N,k}$ will therefore henceforth denote the
marginal distribution of the first $k$ spins.

As announced we want to compare $\mu_{N,k}$ to a product measure which we
will call $\varrho^{(k)}$. In the classical Curie-Weiss model $\varrho^{(k)}$ can also, more
generally, be a mixture of product measures. This situation occurs in the low temperature regime of this model (with zero external magnetic field).
However, as we will see, in the random field Curie-Weiss model with $\E h_i^2 \neq 0$ this situation
does not occur with probability approaching $1$ (Theorem~\ref{theo:main2}).
Of course, the above statement provokes
the question in which metric this comparison will be done. In alignment with \cite{BAZ_chaos} we will choose the relative entropy or
Kullback-Leibler ''distance'' defined as
\be
H(\mu_{N,k}\vert \varrho^{(k)})= \begin{cases} \int \log \left(\frac{d\mu_{N,k}}{d\varrho^{(k)}}\right)d\mu_{N,k}
& \mbox{if } \mu_{N,k} \ll \varrho^{(k)}\\
\infty & \mbox{otherwise.}
\end{cases}
\ee
(even though we will even have stronger statements).
Of course, in our case the above integrals are actually sums since the measures $\mu_{N,k}$ and $\varrho^{(k)}$ live on $\{+1,-1\}^k$.
Note that relative entropy lacks the triangle inequality and therefore is
not a real distance. However, due to Pinsker's inequality
$$
d_{TV}(\mu,\nu) \le \sqrt{\frac 12  H(\mu \vert \nu)}
$$
for any two probability measures $\mu, \nu$ on a measurable space $(\Omega, \mathcal{A})$ convergence in relative entropy implies
convergence in total variation $d_{TV}$.

An important role in the formulation (and proof) of our results will be played by the following function, which can be thought of as a
relative of the Helmholtz free energy:
\be \label{eq:HFE}
G(y):= -\frac{y^2}2+ \E_h \left[\log \cosh\left(\sqrt \beta y+\beta h\right)\right].
\ee
Finally let us agree on the following definition
\begin{definition}
Let $f:\R \to\R$ be a sufficiently often differentiable function and let $x_0$ be a global maximum point of $f$. As usually, we say that
$x_0$ is non-degenerate, if $f'(x_0)=0$ and $f''(x_0)<0$.

For $n \in \N, n\ge 2$ we say that $x_0$ is $n$-degenerate, if $f'(x_0)=f''(x_0)=\ldots = f^{(2n-1)}(x_0)=0$ and $f^{(2n)}(x_0)<0$.
\end{definition}

With these definitions our main results read as follows:
\begin{theorem}\label{theo:main1}
Assume that the function $G(y)$ has a unique, non-degenerate global maximum in some point $y_0$.
Assume that the $(h_i)_{i \in \N}$ are i.i.d.\ and have a finite second moment. %satisfy the above assumptions.

For any fixed realization of $(h_i)_{i \in \N}$ and any fixed $k\in \N$ consider a probability measure on $\{+1,-1\}^k$ defined by
\begin{align}
\varrho^{(k)}(\sigma_1, \ldots, \sigma_k)
&:=\prod_{i=1}^k\frac{\exp\left(\sigma_i\left(\sqrt \beta y_0+\beta h_i \right)\right)}
{2\cosh\left(\sqrt \beta y_0+\beta h_i \right)}\label{eq:defrho}\\
&=
\prod_{i=1}^k\frac{\exp\left(\sigma_i\left(\sqrt \beta y_0+\beta h_i \right)\right)}{\exp\left(\sqrt \beta y_0+\beta h_i \right)+\exp\left(-\sqrt \beta y_0-\beta h_i\right)}.\notag
\end{align}
Then, for a.e.\ realization of $(h_i)_{i \in \N}$, we have $\mu_{N,k}(\sigma_1, \ldots, \sigma_k)\to  \varrho^{(k)}(\sigma_1, \ldots, \sigma_k)$ for all $(\sigma_1,\ldots,\sigma_k) \in \{-1,+1\}^k$ and also
\be
H\left(\mu_{N,k} \vert \varrho^{(k)}\right) \to 0 \qquad \text{ as } N\to\infty.
\ee
\end{theorem}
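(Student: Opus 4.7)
The plan is to rewrite $\mu_{N,k}$ via the Hubbard--Stratonovich transformation and then apply a Laplace-type asymptotic analysis to a \emph{random} one-dimensional integral. First I would use the Gaussian identity
\begin{equation*}
\exp\Bigl(\tfrac{\beta}{2N}S_N^2\Bigr)=\sqrt{\tfrac{N}{2\pi}}\int_{\R} e^{\sqrt\beta\, y\, S_N - Ny^2/2}\,dy, \qquad S_N:=\sum_{i=1}^N\sigma_i,
\end{equation*}
to linearize the quadratic in $-\beta H_N$. Once the spins are decoupled, the sums over $\sigma_{k+1},\ldots,\sigma_N$ factorize and a short manipulation yields
\begin{equation*}
\mu_{N,k}(\sigma_1,\ldots,\sigma_k)=\frac{\displaystyle\int_{\R} p_k(y)\, e^{NF_N(y)}\,dy}{\displaystyle\int_{\R} e^{NF_N(y)}\,dy},\quad p_k(y):=\prod_{i=1}^k\frac{e^{\sigma_i(\sqrt\beta y+\beta h_i)}}{2\cosh(\sqrt\beta y+\beta h_i)},
\end{equation*}
where $F_N(y):=-y^2/2+\tfrac{1}{N}\sum_{i=1}^N\log[2\cosh(\sqrt\beta y+\beta h_i)]$. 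In this form the $k$-marginal is the average of the bounded, continuous prefactor $p_k$ against the probability density proportional to $e^{NF_N(y)}$, and the task reduces to proving that this density concentrates at $y_0$.

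The next step is a random Laplace argument. Since $y\mapsto \log[2\cosh(\sqrt\beta y+\beta h_i)]$ is i.i.d.\ in $i$ with derivatives $\sqrt\beta\tanh(\sqrt\beta y+\beta h_i)$ and $\beta\,\mathrm{sech}^2(\sqrt\beta y+\beta h_i)$ uniformly bounded in both arguments, the strong LLN combined with an Arzel\`a--Ascoli argument gives a.s.\ locally uniform convergence
\[F_N\to G+\log 2,\qquad F_N'\to G',\qquad F_N''\to G''\qquad\text{on compacts.}\]
Together with the non-degeneracy hypothesis $G''(y_0)<0$ and uniqueness of $y_0$, this forces an a.s.\ unique maximizer $y_{0,N}$ of $F_N$ with $y_{0,N}\to y_0$ and $F_N''(y_{0,N})\to G''(y_0)<0$. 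For the global tail bound one notes $F_N(y)\le -y^2/2+\sqrt\beta|y|+\beta\,\bar h_N+\log 2$ with $\bar h_N:=N^{-1}\sum|h_i|\to\E|h|$ a.s.\ (finite by the second moment assumption), so that for every $\delta>0$ there exists $c(\delta)>0$ with $\sup_{|y-y_0|>\delta}F_N(y)\le F_N(y_{0,N})-c(\delta)$ for all $N$ large, a.s.

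Given these ingredients, both integrals concentrate on a window $|y-y_{0,N}|\le N^{-1/2+\eta}$. Inside this window $p_k(y)=p_k(y_0)+o(1)$ by continuity of $p_k$ and $y_{0,N}\to y_0$, while the Gaussian factor $\exp\bigl(\tfrac{N}{2}F_N''(y_{0,N})(y-y_{0,N})^2\bigr)$ and the value $e^{NF_N(y_{0,N})}$ appear identically in numerator and denominator and cancel. Hence $\mu_{N,k}(\sigma)\to p_k(y_0)=\varrho^{(k)}(\sigma)$ a.s.\ and pointwise in $\sigma\in\{-1,+1\}^k$. Since this state space is finite and $\varrho^{(k)}$ is strictly positive at every point, $H(\mu_{N,k}\mid\varrho^{(k)})=\sum_\sigma\mu_{N,k}(\sigma)\log[\mu_{N,k}(\sigma)/\varrho^{(k)}(\sigma)]\to 0$ by continuity of each of the finitely many summands at the limit point.

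The main obstacle lies in Step~2: the integrand depends on the disorder through the whole function $F_N$, and the $\sqrt N$-scale LLN fluctuations in the exponent are a priori much larger than the Gaussian width $N^{-1/2}$ around the saddle. The resolution is that such fluctuations merely shift the random maximizer $y_{0,N}$ by $O(N^{-1/2})$ — harmless for the continuous prefactor $p_k$ — and the overall multiplicative constants they produce appear in both numerator and denominator and drop out of the ratio. The non-degeneracy assumption $G''(y_0)<0$ is essential here: it provides a uniform quadratic barrier around $y_0$ that simultaneously controls the Laplace width and guarantees that $F_N''(y_{0,N})\to G''(y_0)$ along the random sequence of maximizers.
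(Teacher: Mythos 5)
Your proposal is correct, and it reaches Theorem~\ref{theo:main1} by a route that genuinely differs from the paper's in the key technical step. Both arguments start from the same Hubbard--Stratonovich representation of $\mu_{N,k}$ as a ratio of integrals of $e^{NF_N}$ against the bounded prefactor $p_k$ (the paper's $g$), and both then run a Laplace analysis; the difference is how the randomness in the exponent is handled. The paper expands around the deterministic maximizer $y_0$ of $G$, writes $G_N=G+\Delta_N$, and controls the perturbation at scale $N^{\varepsilon-1/2}$: the law of the iterated logarithm for $\Delta_N'(y_0)$ and, for the uniform bound on $\Delta_N''$, the $C(S)$-valued CLT of Jain--Marcus combined with de Acosta's Marcinkiewicz--Zygmund law (Theorems~\ref{theo:JD} and~\ref{theo:deacosta}); this yields the exact Gaussian asymptotics of both integrals, including the random factor $\exp\bigl(\tfrac{N}{2\eta}(\Delta_N'(y_0))^2\bigr)$. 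You instead recenter at the random maximizer $y_{0,N}$ of $F_N$ and let the common window integral cancel in the ratio, so you only need SLLN-level input: a.s.\ locally uniform convergence of $F_N,F_N',F_N''$ (pointwise SLLN on a countable dense set plus equi-Lipschitz bounds --- note that uniform convergence of $F_N''$ uses boundedness of the \emph{third} derivative of $\log\cosh$, which you should state), the quadratic tail bound, and the deterministic bound $F_N''\ge -1$, which gives the lower bound of order $N^{-1/2}e^{NF_N(y_{0,N})}$ on the window integral against which the annulus and tail contributions are negligible; also, ``unique maximizer of $F_N$'' should be read as the unique critical point in a fixed neighbourhood of $y_0$ where $F_N$ is eventually strictly concave (global uniqueness on $\R$ is neither needed nor proved). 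Your route buys simplicity --- the Banach-space limit theorems of Section~2 become unnecessary for this theorem --- whereas the paper's sharper evaluation of the window integral is exactly what gets reused for the degenerate case and, crucially, for Theorem~\ref{theo:main2}, where the relative weights $\exp\bigl(N\Delta_N(y_j)+\tfrac{N}{2\eta_j}(\Delta_N'(y_j))^2\bigr)$ of the several maxima --- precisely the quantities your cancellation discards --- determine the random index $J_N$.
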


For degenerate maxima we have the same result, yet with a different proof:
\begin{theorem}\label{theo:main1b}
Assume that $G$ has a unique, $n$-degenerate global maximum in some point $y_0$ for some $n\ge 2$.
Assume that the $(h_i)_{i \in \N}$ are i.i.d.\ random variables with finite second moment.
%\ML{Das sieht nach einer starken Bedingung aus, sie kommt aus dem CLT für die Zufallspfade.}
For any fixed realization of $(h_i)_{i \in \N}$ and any fixed $k\in \N$ let $\varrho^{(k)}$ be defined as in \eqref{eq:defrho}.
Then, the same conclusions as in Theorem~\ref{theo:main1} hold.
%Then, we again have
%\be
%H\left(\mu_{N,k} \vert \varrho^{(k)}\right) \to 0
%\ee
%as $N \to \infty$.
\end{theorem}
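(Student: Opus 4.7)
The plan is to exploit the Hubbard--Stratonovich transformation to linearise the mean-field interaction, reducing everything to a one-dimensional Laplace-type problem. Applying the Gaussian identity $e^{a^2/2}=(2\pi)^{-1/2}\int_\R e^{-x^2/2+ax}\,dx$ to $\frac{\beta}{2N}(\sum_i\sigma_i)^2$ and summing over all $\sigma$ gives
\begin{equation*}
Z_N \;=\; \sqrt{\tfrac{N}{2\pi}}\,2^N\int_\R e^{NG_N(y)}\,dy,\qquad G_N(y):=-\tfrac{y^2}{2}+\tfrac{1}{N}\sum_{i=1}^N\log\cosh(\sqrt\beta y+\beta h_i),
\end{equation*}
while performing the same computation but summing only over $\sigma_{k+1},\ldots,\sigma_N$ yields, after elementary cancellation,
\begin{equation*}
\mu_{N,k}(\sigma_1,\ldots,\sigma_k)\;=\;\int_\R f(y;\sigma)\,\nu_N(dy),
\end{equation*}
where $f(y;\sigma):=\prod_{i=1}^k\frac{e^{\sigma_i(\sqrt\beta y+\beta h_i)}}{2\cosh(\sqrt\beta y+\beta h_i)}$ is continuous and bounded in $y$ for each fixed $\sigma$, and $\nu_N$ denotes the random probability measure on $\R$ with density proportional to $e^{NG_N(y)}$. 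Since $f(y_0;\sigma)=\varrho^{(k)}(\sigma)$ by \eqref{eq:defrho}, it suffices to prove the weak convergence $\nu_N\Rightarrow\delta_{y_0}$ for $\P_h$-almost every realisation of $(h_i)$.

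The crucial observation is that this weak-convergence statement uses only \emph{global} properties of $G$ (uniqueness of the maximiser) together with an almost-sure uniform comparison between $G_N$ and $G$; it does not require any local Taylor expansion at $y_0$, which is exactly what fails in the $n$-degenerate case. Fix $\eta>0$. For a lower bound on the mass of $\nu_N$ near $y_0$, note that $|G_N'(y)|\le|y|+\sqrt\beta$ implies $G_N$ is Lipschitz on $[y_0-1,y_0+1]$ with a deterministic constant $L$, and the LLN gives $G_N(y_0)\to G(y_0)$ almost surely (integrability of $\log\cosh(\sqrt\beta y_0+\beta h)$ follows from $\E h^2<\infty$), so eventually almost surely
\begin{equation*}
\int_{|y-y_0|<\eta}e^{NG_N(y)}\,dy \;\ge\; 2\eta\,e^{N(G(y_0)-L\eta-o(1))}.
\end{equation*}
For the complementary upper bound, the crude inequality $\log\cosh(x)\le|x|$ together with boundedness of $\tfrac{1}{N}\sum_i|h_i|$ (from $\E|h|<\infty$) yields $G_N(y)\le -y^2/4$ once $|y|$ is sufficiently large, controlling the far tails; and on any compact annulus $\{\eta\le|y-y_0|\le R\}$ the pointwise LLN together with the equicontinuity of $\{G_N\}$ upgrades to the uniform convergence $\sup_{|y|\le R}|G_N-G|\to 0$ almost surely, so that the strict separation $\sup_{|y-y_0|\ge\eta}G(y)<G(y_0)$ translates into $G_N\le G(y_0)-\delta$ uniformly on the annulus for some $\delta=\delta(\eta)>0$. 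Choosing $\eta$ so small that $L\eta<\delta(\eta)$ then forces $\nu_N(\{|y-y_0|\ge\eta\})\to 0$ almost surely, and letting $\eta\downarrow 0$ yields $\nu_N\Rightarrow\delta_{y_0}$.

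Having established weak convergence of $\nu_N$, the bounded continuous integrand $f(\cdot;\sigma)$ immediately yields the pointwise limit $\mu_{N,k}(\sigma)=\int_\R f(y;\sigma)\,\nu_N(dy)\to f(y_0;\sigma)=\varrho^{(k)}(\sigma)$ for every $\sigma\in\{-1,+1\}^k$. Because the state space is finite and $\varrho^{(k)}$ is strictly positive on $\{-1,+1\}^k$, this pointwise convergence upgrades term-by-term to $H(\mu_{N,k}\mid\varrho^{(k)})\to 0$, closing the argument. The main obstacle --- and the reason the method of Theorem~\ref{theo:main1} must be replaced in the degenerate regime --- is a scale mismatch intrinsic to the problem: random fluctuations of $G_N$ about $G$ are generically of order $N^{-1/2}$, whereas at an $n$-degenerate maximum the natural width of the Laplace integral is only $N^{-1/(2n)}$, so for $n\ge 2$ the random maximiser of $G_N$ can be displaced from $y_0$ on a scale much larger than the width of the integral, and a direct stationary-phase expansion around $y_0$ breaks down. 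The virtue of the reformulation above is that it bypasses quantitative saddle-point analysis entirely: the qualitative convergence $\nu_N\Rightarrow\delta_{y_0}$ is insensitive to this scale mismatch and survives regardless of the degeneracy order $n$.
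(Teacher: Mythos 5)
Your reduction to the statement ``$\nu_N\to\delta_{y_0}$ weakly, $\P_h$-a.s.'' is a genuinely different (and softer) route than the paper's: the paper keeps the quantitative Laplace analysis, Taylor-expands $\Delta_N$ to order $2n$, controls $\Delta_N^{(l)}(y_0)$ by the law of the iterated logarithm and $\Delta_N^{(2n)}$ uniformly via the Jain--Marcus CLT and de Acosta's theorem, and then observes that the uncomputable inner integral cancels between numerator and denominator of \eqref{eq:rep_marg}. Your scheme bypasses all of that local analysis, which is legitimate for the fixed-$k$ statement (though it would not yield the sharper asymptotics the paper needs elsewhere, e.g.\ for Theorem~\ref{theo:main2} or the increasing-$k$ remark). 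However, as written your argument has a concrete flaw in its final step. You use the same radius $\eta$ both for the ball in the lower bound and for the excluded annulus, and then ask to ``choose $\eta$ so small that $L\eta<\delta(\eta)$.'' This is impossible for small $\eta$: since $\delta(\eta)\le G(y_0)-G(y_0+\eta)$ and $y_0$ is an $n$-degenerate maximum, $\delta(\eta)=O(\eta^{2n})$, so $\delta(\eta)/\eta\to 0$ and the inequality $L\eta<\delta(\eta)$ fails for all sufficiently small $\eta$ --- yet your concluding step ``letting $\eta\downarrow 0$'' needs the exclusion bound at arbitrarily small scales. (Ironically, this is the same scale mismatch you attribute to the direct saddle-point method.)

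The repair is immediate and you should state it: decouple the two radii. Fix $\eta>0$, set $\delta(\eta):=G(y_0)-\sup_{\eta\le|y-y_0|\le R}G(y)>0$, and then choose a second radius $\eta'\in(0,\eta)$ with $L\eta'<\delta(\eta)/2$; bound
\begin{equation*}
\nu_N\bigl(\{|y-y_0|\ge\eta\}\bigr)\;\le\;
\frac{\int_{\{\eta\le|y-y_0|\le R\}}e^{NG_N(y)}\,dy+\int_{\{|y|>R\}}e^{NG_N(y)}\,dy}
{\int_{\{|y-y_0|<\eta'\}}e^{NG_N(y)}\,dy},
\end{equation*}
use your uniform convergence of $G_N$ to $G$ on $[-R,R]$ for the numerator, the Gaussian tail bound for $|y|>R$ (with $R$ large enough that $-R^2/4<G(y_0)-L\eta'-1$), and the Lipschitz lower bound on the $\eta'$-ball for the denominator; this gives $\nu_N(\{|y-y_0|\ge\eta\})\to 0$ a.s.\ for each fixed $\eta$, and intersecting over a countable sequence $\eta_m\downarrow 0$ yields the claimed weak convergence. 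With that correction (and the minor bookkeeping that the tail bound $G_N(y)\le -y^2/4$ holds for $|y|$ large and $N\ge N_0(\omega)$ a.s.), the rest of your argument --- bounded continuous $g$, strict positivity of $\varrho^{(k)}$ on the finite state space, hence convergence of the relative entropy --- is sound and does not use non-degeneracy anywhere, exactly as you claim.
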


For the case of several (isolated) maxima of the function $G$ we prove the following
\begin{theorem}\label{theo:main2}
Assume that $G$ has several non-degenerate isolated global maxima in some points $y_0, \ldots, y_s$ where $s\ge 1$ and that $(h_i)_{i \in \N}$ are i.i.d.\ random variables with finite second moment satisfying $\E h_i^2 \neq 0$.
For any fixed realization of $(h_i)_{i \in \N}$, any fixed index $j=0, \ldots, s$, and any fixed $k$ let
$$
\varrho^{(k)}_j(\sigma_1, \ldots, \sigma_k):=
\prod_{i=1}^k\frac{\exp\left(\sigma_i\left(\sqrt \beta y_j+\beta h_i \right)\right)}
{2\cosh\left(\sqrt \beta y_j+\beta h_i \right)}.
$$
Then, for almost every realization of the $(h_i)_{i \in \N}$ and every
$N$
there is an
index $J\in \{0, \ldots, s\}=J_N^{(k)}(h_1,\ldots, h_N)$ (depending on that realization) such that
for all $\sigma_1, \ldots, \sigma_k \in \{-1,+1\}$
$$
\left|\mu_{N,k}(\sigma_1, \ldots, \sigma_k)-\varrho^{(k)}_{J_N^{(k)}}(\sigma_1, \ldots, \sigma_k)\right| \to 0
$$
and, consequently,
\be
H\left(\mu_{N,k} \vert \varrho^{(k)}_{J_N^{(k)}}\right) \to 0 \qquad \text{ as } N \to \infty.
\ee
Both limits above should be understood in the sense of convergence in probability.
\end{theorem}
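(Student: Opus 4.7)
The plan is to reduce everything to a Laplace-type analysis of a random one-dimensional integral via the Hubbard--Stratonovich transformation, and to identify $J_N^{(k)}$ as the index of the dominating peak. Writing $-\beta H_N(\sigma)=\tfrac{\beta}{2N}(\sum_i\sigma_i)^2+\beta\sum_i h_i\sigma_i$ and applying the Gaussian identity
$$\exp\!\left(\frac{\beta}{2N}\Big(\sum_{i=1}^N\sigma_i\Big)^{\!2}\right)=\sqrt{\frac{N}{2\pi}}\int_\R\exp\!\left(-\frac{Ny^2}{2}+\sqrt{\beta}\,y\sum_{i=1}^N\sigma_i\right)\dd y,$$
one obtains, after summing over $\sigma_{k+1},\dots,\sigma_N$ and dividing by $Z_N$, the mixture representation
$$\mu_{N,k}(\sigma_1,\dots,\sigma_k)=\int_\R\varrho^{(k)}_y(\sigma_1,\dots,\sigma_k)\,\nu_N(\dd y),\quad \varrho^{(k)}_y(\sigma):=\prod_{i=1}^k\frac{e^{\sigma_i(\sqrt{\beta}y+\beta h_i)}}{2\cosh(\sqrt{\beta}y+\beta h_i)},$$
where $\nu_N$ is the probability measure on $\R$ with density proportional to $\exp(NG_N(y))$ for the empirical analogue
$$G_N(y):=-\frac{y^2}{2}+\frac{1}{N}\sum_{i=1}^N\log\cosh(\sqrt{\beta}y+\beta h_i).$$
The strong law of large numbers gives $G_N\to G$ almost surely, uniformly on compact sets; a crude bound on $\log\cosh$ at $|y|\to\infty$ shows that, almost surely, $\nu_N$ places no mass outside a large fixed ball.

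I would then localise around the global maxima. Since each $y_j$ is isolated and non-degenerate, the implicit function theorem applied to $G_N'$ in a sufficiently small ball $B(y_j,\eps)$ produces, with probability tending to $1$, a unique critical point $y_j^{(N)}$ of $G_N$ with $y_j^{(N)}\to y_j$ and $G_N''(y_j^{(N)})\to G''(y_j)<0$. Standard Laplace asymptotics then give
$$\int_{B(y_j,\eps)}\exp(NG_N(y))\,\dd y=\exp\bigl(NG_N(y_j^{(N)})\bigr)\sqrt{\frac{2\pi}{N|G_N''(y_j^{(N)})|}}\,(1+o(1)),$$
while the integral over $\R\setminus\bigcup_j B(y_j,\eps)$ is exponentially smaller. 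The random index in the theorem is then naturally defined by
$$J_N^{(k)}:=\mathop{\mathrm{argmax}}_{j\in\{0,\dots,s\}}G_N(y_j^{(N)}),$$
well-defined with probability tending to $1$ in view of the gap analysis below.

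The heart of the argument is to show that $\nu_N$ assigns mass tending to $1$ (in probability) to the basin of $y_{J_N^{(k)}}$. By the Laplace formula, the $\nu_N$-ratio between $B(y_j,\eps)$ and $B(y_{J_N^{(k)}},\eps)$ is, up to bounded multiplicative factors, $\exp(-N\Delta_N^{j})$ with $\Delta_N^{j}:=G_N(y_{J_N^{(k)}}^{(N)})-G_N(y_j^{(N)})\geq 0$. Because $G(y_j)=G(y_0)$ for every $j$, the deterministic part of $G_N(y_j)-G_N(y_0)$ vanishes, and the multivariate CLT applied to the i.i.d.\ vectors $\bigl(\log\cosh(\sqrt{\beta}y_j+\beta h_i)-\log\cosh(\sqrt{\beta}y_0+\beta h_i)\bigr)_{j=1}^s$ shows that $\sqrt{N}(G_N(y_j^{(N)})-G_N(y_0^{(N)}))_{j=1}^s$ converges in distribution to a centred Gaussian vector whose maximum is almost surely uniquely attained. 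The required non-degeneracy holds because $h\mapsto\log\cosh(\sqrt{\beta}y_j+\beta h)-\log\cosh(\sqrt{\beta}y_{j'}+\beta h)$ is strictly monotone whenever $y_j\neq y_{j'}$ (by strict monotonicity of $\tanh$), so the associated marginal variance is positive provided $h_i$ is not a point mass; but $h_i\equiv c$ combined with $\E h_i^2\neq 0$ would force $c\neq 0$, and the corresponding classical Curie--Weiss free energy $-y^2/2+\log\cosh(\sqrt{\beta}y+\beta c)$ has a unique global maximum, contradicting the hypothesis $s\geq 1$. Consequently $J_N^{(k)}$ is uniquely defined with probability $\to 1$, and for each $j\neq J_N^{(k)}$ the rescaled gap $\sqrt{N}\Delta_N^{j}$ converges in distribution to an almost surely strictly positive random variable, so that $N\Delta_N^{j}=\sqrt{N}\cdot\sqrt{N}\Delta_N^{j}\to\infty$ in probability and the non-winning basins carry vanishing $\nu_N$-mass.

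Within the winning basin the Laplace argument further shows that $\nu_N$ concentrates in a window of width $O(N^{-1/2})$ around $y_{J_N^{(k)}}^{(N)}\to y_{J_N^{(k)}}$. Since $y\mapsto\varrho^{(k)}_y(\sigma)$ is continuous and bounded by $1$, splitting the integral $\int\varrho^{(k)}_y(\sigma)\,\nu_N(\dd y)$ into the contributions from this small window and from its complement yields $\mu_{N,k}(\sigma)-\varrho^{(k)}_{J_N^{(k)}}(\sigma)\to 0$ in probability, uniformly in $\sigma\in\{-1,+1\}^k$; the relative-entropy statement then follows immediately since the finitely many values of both measures are bounded away from $0$. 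I expect the main obstacle to be exactly this gap analysis: the random index $J_N^{(k)}$ may switch whenever two values of $G_N(y_j^{(N)})$ fall within $O(N^{-1/2})$ of one another, an event of positive (but small) limiting probability, and this is precisely what forces the conclusion to be formulated in probability rather than almost surely.
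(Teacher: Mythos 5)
Your proposal is correct in substance and follows the same overall strategy as the paper (Hubbard--Stratonovich representation, Laplace analysis of $\int e^{NG_N(y)}g(y)\,\dd y$, and selection of the random winning index via a multivariate CLT whose non-degeneracy is proved exactly as in the paper, using strict monotonicity of $h\mapsto\log\cosh(\sqrt\beta y_j+\beta h)-\log\cosh(\sqrt\beta y_{j'}+\beta h)$ and the observation that $h_i$ cannot be a.s.\ constant under $\E h_i^2\neq 0$ and $s\ge 1$). The genuine difference is in the Laplace step: you expand around random critical points $y_j^{(N)}$ of $G_N$ obtained from the implicit function theorem and compare the random peak heights $G_N(y_j^{(N)})$, whereas the paper expands around the deterministic maximizers $y_j$ of $G$ and controls the perturbation $\Delta_N=G_N-G$ explicitly, using the law of the iterated logarithm for $\Delta_N'(y_j)$ and a $C(S)$-valued CLT (Jain--Marcus) combined with the Marcinkiewicz--Zygmund law (de Acosta) to bound $\Delta_N''$ uniformly; in the paper's version the randomness of the peak shows up as the correction $\exp\bigl(N(\Delta_N'(y_j))^2/(2\eta_j)\bigr)$, which is then shown to be at most $e^{\log^2 N}$ and hence harmless, while your version absorbs this correction into $G_N(y_j^{(N)})$ and so avoids the Banach-space machinery, at the cost of needing a uniform law of large numbers for $G_N'$ and $G_N''$ on a compact interval (available, since all $y$-derivatives of $\log\cosh(\sqrt\beta y+\beta h)$ are bounded and Lipschitz uniformly in $h$). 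One step you assert but do not justify, and which you should add to make your route airtight: the CLT you invoke is for $\sqrt N\bigl(G_N(y_j)-G_N(y_0)\bigr)$ at the \emph{deterministic} points, so you must check that replacing $y_j$ by $y_j^{(N)}$ changes nothing; this follows because $G_N'(y_j^{(N)})=0$ and $G_N''$ is bounded away from $0$ near $y_j$ give $y_j^{(N)}-y_j=O_P(N^{-1/2})$ (since $\Delta_N'(y_j)=O_P(N^{-1/2})$ by the one-dimensional CLT), whence $G_N(y_j^{(N)})-G_N(y_j)=O_P(N^{-1})=o_P(N^{-1/2})$. With that inserted, your gap analysis ($N$ times a gap whose $\sqrt N$-rescaling has an a.s.\ positive distributional limit tends to $\infty$ in probability), the concentration of $\nu_N$ in an $O(N^{-1/2})$ window, and the passage from uniform convergence of the marginals to convergence of the relative entropy all match the paper's conclusions, including the reason why the result is only in probability.
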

\begin{rems}
\begin{enumerate}
\item Note that the situation described in Theorems \ref{theo:main1} and \ref{theo:main2} differs from the {\it{increasing}} propagation of
chaos when $k$ may depend on $N$ proved for mean-field Gibbs measures in \cite{BAZ_chaos}. In the latter, in the general situation it is
important that the maxima of $G$ or the free energy, respectively, are non-degenerate. The authors in \cite{BAZ_chaos} also treat a
specific example (the Curie-Weiss model at $\beta=1$) of a unique degenerate maximum. There, the fact that the maximum is degenerate is
reflected in a slower increase of $k$ with $N$, i.e.\ $k=o(\sqrt N)$ rather than $k=o(N)$. In our situation $k$ is independent of $N$
and this latter phenomenon cannot occur. However, the degeneracy of the maximum is reflected in the proof. Moreover, concerning increasing propagation of chaos notice the part \eqref{rem:incr} of this remark below.

\item One might also want prove a version of Theorem \ref{theo:main2} with several degenerate maxima. However, this generalization is
not completely straightforward. The reason is the remark after equation \eqref{eq:innerinttheo3}.
Since in our main example such a situation
does not occur we refrain from trying to prove such a version.
\item  \label{rem:incr} A slight modification of the proof of Theorem~\ref{theo:main1} (see Remark~\ref{rem:increasing}) shows that in this situation we also have increasing propagation of chaos in the sense that $k= k(N)$ might increase with $N$  but not faster than $o(N^{\frac 12 - \vep})$ (for any fixed $\vep >0$).  The result takes the form
    $$
    \lim_{N\to\infty} \frac{\mu_{N,k}(\sigma_1, \ldots, \sigma_k)}{\varrho^{(k)}(\sigma_1, \ldots, \sigma_k)} = 1
    $$
    uniformly in $(\sigma_1,\ldots,\sigma_k) \in \{-1,+1\}^k$, and consequently also $H(\mu_{N,k} \vert \varrho^{(k)}) \to 0$  as $N\to\infty$.
\item  Note that Theorem \ref{theo:main2} is of a different nature than the situation where $h_i=0$ a.s., i.e.\ the case of the classic
Curie-Weiss model with zero external magnetic field. In the latter~\cite{BAZ_chaos}, in the low-temperature region $\beta>1$ there is also (increasing) propagation of chaos, but
the limiting measure is a mixture of two product measures rather than a  pure product measure (chosen at random from a finite collection of such measures).

\item Notice that in
\cite{JKLM23} also the question when the chaos stops to propagate in the classic Curie-Weiss model was discussed. More precisely, for blocks of a size $k=k(N)$ such that
$k(n)/N \to \alpha >0$ the non-zero limit between the total variation distance of the marginals of the Gibbs measure and the appropriate product measure was identified.
 \end{enumerate}
\end{rems}

\begin{rems}
\begin{enumerate}
\item
The limiting measure of the measures $\mu_{N,k}$ as $N \to \infty$, i.e.\ the measures $\varrho_j^{(k)}$ in Theorem \ref{theo:main2}
are related to the empirical meta-states in the Random Field Curie-Weiss model as constructed in \cite{Ku97,Ku98}.
As a matter of fact,  in the Random Field Curie-Weiss model with dichotomous random field and in the region where $s=1$, the emprical
metastates are shown to be random mixtures of infinite product measures. The finite dimensional marginals of these product measures
in our language are
$$
\prod_{i=1}^k\frac{\exp\left(\sigma_i\left(\sqrt \beta y_j+\beta h_i \right)\right)}
{2\cosh\left(\sqrt \beta y_j+\beta h_i \right)}, \quad j=0,1.
$$
\item
Another disordered model for which propagation of chaos was shown to be true is the Hopfield model with a sufficiently small number of patterns
and either at high temperatures or conditioned on the overlap being close to one of the patterns. This result follows from a deep
analysis of the structure of the free energy close to the patterns \cite[Theorem 81.5]{BG98book}.
\end{enumerate}
\end{rems}
We organize the rest of this note in the following way. In Section 2, for the reader's convenience, we collect some results related
to the Marcinkiewicz-Zygmund strong law of large numbers and the Central Limit Theorem in certain Banach spaces
that we will use in the proof of the main theorems. In Section 3 we will prove Theorems \ref{theo:main1}, \ref{theo:main1b},
and \ref{theo:main2}.
We will give the proof of Theorem \ref{theo:main1} in detail. The proofs of Theorems \ref{theo:main1b} and \ref{theo:main2} follow
in large
parts the same idea as the proof
of Theorem \ref{theo:main1} and we just indicate how to treat the degeneracy (for Theorem \ref{theo:main1b}) or how the random choice
of the measures
$\varrho_j^{(k)}$ arises, rather than a mixture of these measures (for Theorem \ref{theo:main2}). Section~\ref{sec:example} will be devoted to the
example of
a Random Field Curie-Weiss model with dichotomous and centered random field.

\section{Auxiliary results}
In Section 3 we will need the following result related to the  Marcinkiewicz-Zygmund strong law of large numbers for random functions.
\begin{theorem}\cite[Theorem~3.1]{deAcosta81}\label{theo:deacosta}
Let $B$ be a separable Banach space, and let $1 < p < 2$. Let $(X_j)_{j=1}^\infty$ be independent identically distributed
$B$-valued random variables with $\E \lVert X_1 \rVert^p < \infty$. Define $S_n = \sum_{j=1}^n X_j$. Then
\be
\frac{S_n}{n^{1/p}} \to 0 \qquad \mbox{in probability, if and only if} \quad \frac{S_n}{n^{1/p}} \to 0 \mbox{ almost surely.}
\ee
\end{theorem}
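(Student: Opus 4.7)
The ``only if'' direction is immediate, so the substantive content is the reverse implication. My approach would combine symmetrization, Lévy's maximal inequality for sums of symmetric independent Banach-valued variables, the Hoffmann--Jørgensen inequality to bridge in-probability information and $L^p$ control, and a Borel--Cantelli argument along a dyadic subsequence. First, let $(X_j')$ be an independent copy of $(X_j)$ and pass to the symmetrized summands $\tilde X_j := X_j - X_j'$; it suffices to establish the almost-sure statement for the symmetric walk $\tilde S_n := \sum_{j \le n} \tilde X_j$, since one recovers the claim for $S_n$ itself by Fubini once one notes that the hypothesis combined with $\E \|X_1\|^p < \infty$ and $p > 1$ forces $\E X_1 = 0$ (through $S_n/n \to \E X_1$ a.s.\ by the Banach-valued strong law of large numbers).

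For the symmetric walk, Lévy's inequality
$$\P\!\left(\max_{k \le n} \|\tilde S_k\| > t\right) \le 2\, \P(\|\tilde S_n\| > t)$$
upgrades any tail bound on $\|\tilde S_n\|$ to one on the running maximum. Evaluated along dyadic times $n_m := 2^m$, this controls the whole block $[n_m, n_{m+1}]$ in terms of the endpoint. To obtain summability of the resulting probabilities I would next convert the in-probability hypothesis into an $L^p$-bound via the Hoffmann--Jørgensen inequality, which estimates $\E \|\tilde S_n\|^p$ in terms of $\E \max_{j \le n} \|\tilde X_j\|^p$ and a quantile level $t_n$ at which $\P(\|\tilde S_n\| > t_n)$ is small. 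The moment assumption $\E \|X_1\|^p < \infty$ yields $\E \max_{j \le n} \|\tilde X_j\|^p = o(n)$ by a standard truncation at level $n^{1/p}$ combined with the summability of $\sum_j \P(\|X_j\| > j^{1/p})$, while the in-probability hypothesis forces $t_n = o(n^{1/p})$. Together these give $\E \|\tilde S_n\|^p = o(n)$, and Markov's inequality, applied to the maximal inequality at level $\eps n_m^{1/p}$, then supplies $\sum_m \P(\max_{k \le n_{m+1}} \|\tilde S_k\| > \eps n_m^{1/p}) < \infty$, so Borel--Cantelli delivers $\tilde S_n/n^{1/p} \to 0$ almost surely.

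The main obstacle is the step converting in-probability information into $L^p$-bounds. In a general separable Banach space there is no inner product and no orthogonality-based variance bound, so one cannot use a direct Kolmogorov-type second-moment argument available in the scalar Marcinkiewicz--Zygmund proof. The Hoffmann--Jørgensen inequality is the standard substitute, but extracting the sharp rate $\E \|\tilde S_n\|^p = o(n)$ requires a careful truncation that cleanly separates the contribution of a single atypically large summand from the bulk and that is quantitative enough to be summable along the dyadic subsequence. Once this bridge is in place, the remaining symmetrization, maximal inequality, and Borel--Cantelli steps are classical and assemble as sketched above.
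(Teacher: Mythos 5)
Your proposal addresses a statement the paper itself does not prove: Theorem~\ref{theo:deacosta} is imported verbatim from de Acosta's 1981 paper, so the only meaningful comparison is with the literature. Your outline (symmetrization, L\'evy's maximal inequality over dyadic blocks, Hoffmann--J{\o}rgensen, Borel--Cantelli) is the standard Ledoux--Talagrand-style route to the Banach-space Marcinkiewicz--Zygmund law and is viable in principle; de Acosta's own proof is of a different flavour, resting on his inequality $\E\bigl|\lVert S_n\rVert-\E\lVert S_n\rVert\bigr|^p\le c_p\sum_{j\le n}\E\lVert X_j\rVert^p$ (a martingale-difference estimate) combined with truncation. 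However, the quantitative core of your sketch does not close. From $\E\lVert\tilde S_n\rVert^p=o(n)$ and Markov at level $\eps\, n_m^{1/p}$ with $n_m=2^m$, L\'evy's inequality gives only $\P\bigl(\max_{k\le 2^{m+1}}\lVert\tilde S_k\rVert>\eps\,2^{m/p}\bigr)\le C\,\E\lVert\tilde S_{2^{m+1}}\rVert^p/(\eps^p 2^m)=o(1)$, a bound that tends to zero but is \emph{not} summable in $m$; an unquantified $o(n)$ moment bound can never feed Borel--Cantelli along dyadic blocks, so the final step of your argument fails as written.

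The standard repair is a genuine truncation at level of order $n^{1/p}$: the large summands are disposed of by $\sum_n\P(\lVert X_1\rVert>n^{1/p})<\infty$, while the truncated, bounded summands are controlled by a second-moment inequality (de Acosta's inequality with exponent $2$, Yurinskii's martingale decomposition, or an iterated Hoffmann--J{\o}rgensen bound), which \emph{is} summable because $\sum_m 2^{-2m/p}\,2^m\,\E\bigl[\lVert X_1\rVert^2\mathbf 1_{\{\lVert X_1\rVert\le 2^{m/p}\}}\bigr]<\infty$ for $p<2$; the in-probability hypothesis enters to show that the centerings of the truncated sums are $o(n^{1/p})$. A smaller inaccuracy: your desymmetrization is not closed by ``Fubini plus $\E X_1=0$''. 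The correct mechanism is that Fubini yields $(S_n-c_n)/n^{1/p}\to 0$ a.s.\ with the deterministic centers $c_n=S_n'(\omega')$ for a suitable fixed $\omega'$, and it is the in-probability hypothesis on $S_n/n^{1/p}$ that then forces $c_n/n^{1/p}\to 0$; mean-zero plays no role at this point. Both issues are repairable, but the summability gap is a genuine missing ingredient, not a routine detail.
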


The second result we need  is a Central Limit Theorem for random continuous functions. In order to formulate it, let $(S,d)$ be a compact metric
space, let $\mathcal{C}(S)$ be the Banach space of continuous functions on $S$ (with the supremum norm), and let $\mathcal{C}(S)^*$ be the space of continuous, linear functionals
on it. Moreover, for $\vep >0$ let $N(S,\vep)$ denote the minimum number of balls of radius at most $\vep$ that cover $S$ and set
$$
H(S,\vep):=\log N(S, \vep).
$$
Then, for a sequence of i.i.d. random variables $(X_n)_{n=1}^\infty$ with values in $\mathcal{C}(S)$ such that
\be \label{eq:centered}
\E f(X_1)=0 \qquad \mbox{for all } f\in \mathcal{C}(S)^*
\ee
and
\be \label{eq:finvar}
\sup_{t \in S} \E \left[(X_1(t))^2\right] <\infty
\ee
the following theorem holds
\begin{theorem}\cite[Theorem 1 and Corollary 1]{JainMarcus} \label{theo:JD}
In the above situation assume that there is a non-negative random variable $M$ with $\E[M^2]< \infty$ such that
\be
\vert X(s,\omega)-X(t,\omega)\vert \le M(\omega) d(s,t)
\ee
and
\be
\sum_{n=1}^\infty 2^{-n} H(S,2^{-n})^{1/2} < \infty.
\ee
Then the distributions of $\frac 1 {\sqrt n}\left(X_1 +\ldots + X_n\right)$ converge weakly to a Gaussian measure on $\mathcal{C}(S)$.
\end{theorem}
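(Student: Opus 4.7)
The plan is to reduce the weak convergence in $\mathcal{C}(S)$ to the two standard ingredients: (i) convergence of the finite-dimensional distributions of $S_n/\sqrt{n}$, and (ii) tightness of the laws of $S_n/\sqrt{n}$ on $\mathcal{C}(S)$. Once both hold, Prokhorov's theorem produces a weak limit, and its identification as a centered Gaussian measure with covariance kernel $K(s,t):=\E[X_1(s)X_1(t)]$ is forced by the fact that every continuous linear functional evaluated at the limit is a classical one-dimensional Gaussian limit.

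Step (i) is straightforward: fixing $t_1,\dots,t_k\in S$, the random vectors $(X_j(t_1),\dots,X_j(t_k))$, $j\geq 1$, are i.i.d.\ in $\R^k$. Since each point evaluation $f\mapsto f(t_\ell)$ lies in $\mathcal{C}(S)^*$, the centering hypothesis \eqref{eq:centered} forces $\E X_j(t_\ell)=0$, and condition \eqref{eq:finvar} supplies the finite second moments. The classical multivariate CLT then gives convergence of $n^{-1/2}(S_n(t_1),\dots,S_n(t_k))$ to a centered Gaussian vector with covariance $(K(t_\ell,t_m))_{\ell,m}$.

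Step (ii) is the heart of the matter and I would handle it by a chaining argument driven by the metric entropy hypothesis. Choose, for each $n\geq 0$, an optimal $2^{-n}$-net $T_n\subset S$ with $\lvert T_n\rvert = N(S,2^{-n})$, together with a nearest-point map $\pi_n\colon S\to T_n$. For any $s\in S$ one has the telescoping identity
\[
X_j(s)-X_j(\pi_0(s))=\sum_{n\geq 1}\bigl(X_j(\pi_n(s))-X_j(\pi_{n-1}(s))\bigr),
\]
and the Lipschitz bound $\lvert X_j(\pi_n(s))-X_j(\pi_{n-1}(s))\rvert\leq 3\cdot 2^{-n+1}M_j$ controls each link in $L^2$ with variance $O(2^{-2n})$. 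Applying a Bernstein-type maximal inequality to the normalized sum $n^{-1/2}\sum_{j\leq n}(\cdots)$ across the at most $\lvert T_n\rvert^2\leq\exp(2H(S,2^{-n}))$ links at level $n$, and summing, produces a bound on $\E\,\omega_{S_n/\sqrt{n}}(\delta)$ of Dudley type, controlled by $\sum_{n\geq n(\delta)}2^{-n}H(S,2^{-n})^{1/2}$. The entropy hypothesis forces this tail to vanish as $\delta\to 0$, uniformly in $n$, and Arzelà--Ascoli then yields tightness.

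The main obstacle is justifying that Bernstein's inequality gives a genuinely sub-Gaussian tail on each chain link despite the bound $|X_j(s)-X_j(t)|\leq M_j d(s,t)$ having only an $L^2$ tail on $M_j$. The Jain--Marcus resolution is to truncate: apply Bernstein on the event $\{M_j\leq \lambda\sqrt{n}\}$ (where the chain increment is bounded enough for Bernstein to give a Gaussian tail at scale $2^{-n}$), and handle the residual $\{M_j>\lambda\sqrt{n}\}$ separately using Markov's inequality and $\E[M^2]<\infty$. Optimizing $\lambda$ against the entropy integral is the delicate point, but the condition $\sum_n 2^{-n}H(S,2^{-n})^{1/2}<\infty$ is tailored precisely to make both contributions summable and hence the tightness estimate finite.
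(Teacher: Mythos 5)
A preliminary remark: the paper offers no proof of this statement. Theorem \ref{theo:JD} is imported verbatim from \cite{JainMarcus} and used as a black box in the proof of Theorem \ref{theo:main1}, so there is no internal argument to compare yours against; you are reconstructing the proof of the cited result. On its own terms, your skeleton is the right one: finite-dimensional convergence plus tightness in $\mathcal{C}(S)$, with the limit identified as Gaussian because every $f\in\mathcal{C}(S)^*$ applied to the limit is a one-dimensional Gaussian. The finite-dimensional step is complete as you state it, since point evaluations lie in $\mathcal{C}(S)^*$ and \eqref{eq:centered}, \eqref{eq:finvar} give centering and second moments.

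The gap is in the tightness step, which is the entire content of the theorem, and your truncation scheme does not deliver it as described. Truncating every $M_j$ at the same level $\lambda\sqrt{n}$, the normalized chain increment at chaining level $m$ is bounded by $C\lambda 2^{-m}$ while its standard deviation is of order $2^{-m}(\E M^2)^{1/2}$. To beat the union bound over $\exp\left(2H(S,2^{-m})\right)$ links you must move to deviations $t_m\asymp 2^{-m}H(S,2^{-m})^{1/2}$, and then the linear (Poisson) term in Bernstein's inequality dominates the sub-Gaussian term as soon as $\lambda H(S,2^{-m})^{1/2}\gtrsim 1$, which happens at all sufficiently fine levels whenever $S$ is infinite. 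In that regime Bernstein only yields deviations of order $2^{-m}\lambda H(S,2^{-m})$, so your argument, as sketched, would require the stronger condition $\sum_n 2^{-n}H(S,2^{-n})<\infty$ rather than the stated $\sum_n 2^{-n}H(S,2^{-n})^{1/2}<\infty$. The actual Jain--Marcus argument (and later routes, e.g.\ Ossiander's bracketing CLT, where the Lipschitz hypothesis provides $L^2$-brackets of size $\varepsilon(\E M^2)^{1/2}$) ties the truncation level to the chaining level; this is precisely where the square-root entropy condition is spent, and it is the step you defer to ``optimizing $\lambda$''. A secondary issue: you bound $\E\,\omega_{S_n/\sqrt n}(\delta)$ but dismiss the event $\{\max_{j\le n}M_j>\lambda\sqrt n\}$ by Markov's inequality alone; for an expectation bound you must also control the modulus on that event (e.g.\ using uniform integrability of $M^2$), or else switch to an in-probability tightness criterion. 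As written, the heart of the theorem is asserted rather than proved.
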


Let us now turn to the proof of our main theorems.

\section{Proof of the main Theorems}
We start with the simplest situation of a unique, non-degenerate global maximum. This already contains many of the most important ideas.
\begin{proof}[Proof of Theorem \ref{theo:main1}]
Let us start by introducing some notation that is also used in the proofs of the
other two main theorems. For $\sigma \in\{-1,+1\}^N$ let us denote by $\tilde \sigma=\tilde \sigma_k$ the projection of
$\sigma$ onto its first $k$ coordinates, thus $\tilde \sigma:=\tilde \sigma(\sigma):=(\sigma_1, \ldots, \sigma_k)$.
Notice that with this notation
\be \label{eq:mutilde}
\mu_{N,k}(\tilde \sigma)= \sum_{\sigma_{k+1}, \ldots \sigma_N\in \{-1,+1\}}\frac{\exp\left(-\beta H_N(\sigma)\right)}{Z_N}.
\ee
We will rewrite both, the numerator and the denominator of the right-hand side, by a Hubbard-Stratonovich transformation. Using this well-known
technique we obtain
\begin{align*}
Z_N &= 2^{N} 2^{-N} \sum_{\sigma'\in\{-1,+1\}^N} \exp\left(\frac{\beta}{2N}\left(\sum_{i=1}^N \sigma'_i \right)^2+\sum_{i=1}^N \beta
\sigma'_i h_i\right)\\
&=2^N \frac 1 {\sqrt{2\pi}}\int_{-\infty}^\infty 2^{-N} \sum_{\sigma'\in\{-1,+1\}^N}  \exp\left(-\frac{x^2}2+\sqrt{\frac{\beta}N}
\sum_{i=1}^N \sigma'_i x+\sum_{i=1}^N \beta \sigma'_i h_i\right)dx\\
&=2^N \frac 1 {\sqrt{2\pi}}\int_{-\infty}^\infty \exp\left(-\frac{x^2}2+\sum_{i=1}^N \log\cosh\left(\sqrt{\frac{\beta}N}
x+\beta h_i\right)\right)dx\\
&=2^N \sqrt{\frac N {2\pi}}\int_{-\infty}^\infty \exp\left(-N\frac{y^2}2+N\frac 1 N\sum_{i=1}^N \log\cosh\left(\sqrt{\beta}
y+\beta h_i\right)\right)dy
\end{align*}
by substituting $\frac{x}{\sqrt N}=y$ in the last step. Hence, if we define
$$
G_N(y):= -\frac{y^2}2+\frac 1 N\sum_{i=1}^N \log\cosh\left(\sqrt{\beta}
y+\beta h_i\right)
$$
we have seen that
$$
Z_N= 2^N \sqrt{\frac N {2\pi}}\int_{-\infty}^\infty \exp\left(N G_N(y)\right) dy.
$$
In the same way we can treat the nominator in \eqref{eq:mutilde} to obtain
\begin{align*}
& \sum_{\sigma_{k+1}, \ldots \sigma_N\in \{-1,+1\}}\exp\left(-\beta H_N(\sigma)\right)\\
=&2^{N-k} \frac 1 {\sqrt{2\pi}}\int_{-\infty}^\infty 2^{-(N-k)} \sum_{\sigma_{k+1}, \ldots \sigma_N\in \{-1,+1\}}  \exp\left(-\frac{x^2}2+\sqrt{\frac{\beta}N}
\sum_{i=1}^N \sigma_i x+\sum_{i=1}^N \beta \sigma_i h_i\right)dx\\
=&2^{N-k} \sqrt{\frac 1 {2\pi}}\int_{-\infty}^\infty \exp\left(-\frac{x^2}2+\sum_{i=k+1}^N \log\cosh\left(\sqrt{\frac{\beta}N}
x+\beta h_i\right)\right)
\exp\left(\sum_{i=1}^k\left(\sqrt{\frac{\beta}N} x \sigma_i + \beta h_i \sigma_i\right)\right) dx\\
= &2^{N-k} \sqrt{\frac N {2\pi}}\int_{-\infty}^\infty \exp\left(-N\frac{y^2}2+\sum_{i=k+1}^N \log\cosh\left(\sqrt{\beta}
y+\beta h_i\right)\right)
\exp\left(\sum_{i=1}^k(\sqrt \beta y \sigma_i+\beta h_i \sigma_i)\right) dy\\
= &2^{N} \sqrt{\frac N {2\pi}}\int_{-\infty}^\infty \exp\left(N G_N(y)\right)g(y) dy
\end{align*}
Here we have introduced the function
$$
g(y):= \prod_{i=1}^k \frac{\exp\left(\sigma_i\left(\sqrt \beta y+\beta h_i \right)\right)}
{2\cosh\left(\sqrt \beta y+\beta h_i \right)}.
$$
Note that $g$ is indeed solely a function of $y$ because $\sigma_1, \ldots, \sigma_k$ and $h_1, \ldots, h_k$ are kept fixed for the purpose of this proof.
Moreover, as a function in $y$ it is arbitrarily often differentiable and bounded.

Therefore
\be \label{eq:rep_marg}
\sum_{\sigma_{k+1}, \ldots \sigma_N \in \{-1,+1\}}\frac{\exp\left(-\beta H_N(\sigma)\right)}{Z_N}=
\frac{ \int_{-\infty}^\infty \exp\left(N G_N(y)\right) g(y) dy}{\int_{-\infty}^\infty \exp\left(N G_N(y)\right) dy}.
\ee
It is natural to expect that we can treat these integrals with Laplace's method. This is, however, not straightforward, because
first we need to restrict the domain of integration to a finite interval and second, more importantly, the function $G_N$ in the exponent
depends on $N$.

To solve the first of these problems note that for each fixed $y\in \mathbb R$ the functions $G_N(y)$ converge almost surely to $G(y)$ as defined in \eqref{eq:HFE}, by the law of large numbers.  Also observe
that for this theorem we assume that $G(\cdot)$ has a unique non-degenerate maximum in $y_0$. Finally, we remark
that $G(y)$ diverges to $-\infty$, when $|y| \to \infty$. But then
also $G_N(y)$ diverges almost surely to $-\infty$, when $|y| \to \infty$. Indeed, the reason that $G(y) \to -\infty$ as
$|y| \to \infty$ is, of course, that for $|y|$ large enough
\begin{align}
\left| \E_h \log \cosh(\sqrt \beta y+ \beta h ) \right| & \le \log 2 +\log \cosh(\sqrt \beta y)+\left|\E_h\log \cosh(\beta h)\right|\\
& \le \log 2+ |\sqrt \beta y|+ |\E_h \beta h | \le  y^2/4
\end{align}
by the addition theorem for $\cosh$, the fact that $|\sinh(x)| \le \cosh(x)$, and $\cosh(x)\le e^{|x|}$. In the same way,
we estimate
$$
\left|\frac 1N \sum_{i=1}^N \log \cosh(\sqrt \beta y+ \beta h_i y)\right|\le
\log 2+ |\sqrt \beta y|+ \left| \frac 1N \sum_{i=1}^N h_i \beta \right|.
$$
%As
%$$
%\frac 1N \sum_{i=1}^N h_i \beta   \to \beta  \E_h h
%$$
%almost surely and since $h$ is integrable, we have that
%$$
%\left|\frac 1N \sum_{i=1}^N \log \cosh(\sqrt \beta y+ \beta h_i )\right| \le  y^2/4
%$$
%for $y$ large enough and all $N \ge N_0(\omega)$.
It follows that, for sufficiently large $|y|$ and all $N \ge N_0(\omega)$,
$$
G_N(y) \leq -y^2/4 + C(\omega).
$$
%$$
%Since $|\sqrt \beta y|-y^2/2 \to -\infty$ as $|y|\to\infty$ and the remaining terms do not depend on $y$, it follows that $G_N(y)$ diverges to $-\infty$ for $|y| \to \infty$.

Hence, there is an interval $[a,b]$ (without loss of generality containing $y_0$, with  $a=y_0-r<0$ and $b=y_0+r>0$ for some
sufficiently large $r>0$) such that for any function $0\leq g(y)\leq 1$ %($l$ as large as we need it)
\be \label{eq:intrhs}
\int_{-\infty}^\infty \exp\left(N G_N(y)\right) g(y)dy = \int_{a}^{b} \exp\left(N G_N(y)\right) g(y)dy  + O\left(\int_{\R \backslash [a,b]} e^{-N (y^2/4 -C(\omega))}dy\right).
\ee
For the first integral on the right-hand side of \eqref{eq:intrhs} we now adopt a version of Laplace's method.

To this end we fix some $\vep \in (0, 1/2)$ and we decompose
$$[y_0-r,y_0+r]= A_N \mathop{\dot{\cup}} \left(y_0-N^{\vep-1/2}, y_0+N^{\vep-1/2}\right),$$
where, of course,
\be
\label{eq:AN}
A_N:=[y_0-r,y_0+r]\setminus \left(y_0-N^{\vep-1/2}, y_0+N^{\vep-1/2}\right).
\ee
Let us first analyze the contribution of $A_N$ to the first integral on the right-hand side of \eqref{eq:intrhs}.
To this end write
\be
\label{eq:intGGN}
\int_{A_N} \exp\left(N G_N(y)\right) g(y)dy= \int_{A_N} \exp\left(N G(y)+N \Delta_N(y)\right) g(y)dy
\ee
where
\begin{align}
\Delta_N(y)
&:=
G_N(y) -G(y) \notag \\
&=
\frac 1N \sum_{i=1}^N \log \cosh\left(\sqrt \beta y+\beta h_i \right)
-\E_h \left[\log \cosh\left(\sqrt \beta y+\beta h\right)\right]=:\frac 1N \sum_{i=1}^N X_i(y).  \label{eq:Delta}
\end{align}
Notice that for each $y\in \R$ we have $\E_h X_i(y)=0$ and $\E_h(X_i^2(y)) <\infty$ since $h_i$ has finite second moment.

By a simple Taylor expansion in $y_0$:
\begin{equation}\label{eq:taylor_Delta_N}
\Delta_N(y)=\Delta_N(y_0)+\Delta'_N(y_0)(y-y_0)+\frac 12 \Delta''_N(\xi)(y-y_0)^2
\end{equation}
for some $\xi = \xi(y) \in [y_0,y]$ or $\xi \in [y,y_0]$. %, respectively.

Notice that
\begin{align*}
\Delta'_N(y_0)=& \frac 1N \sum_{i=1}^N X'_i(y_0)\\
= &
\frac 1N \sum_{i=1}^N \sqrt \beta\tanh\left(\sqrt \beta y_0+\beta h_i\right)-\E_h\left[\sqrt
\beta\tanh\left(\sqrt \beta y_0+\beta h \right)\right].
\end{align*}
Since $|\tanh z|\leq 1$,  $(X_i'(y_0))_i$ have expectation $0$ and a finite variance. Thus, by the Law of
the Iterated Logarithm,
\begin{equation}\label{eq:LIL1}
|\Delta'_N(y_0)| \le \frac{\log N}{\sqrt N}
\ee
for all $N \ge N_1= N_1(\omega)$ with a $N_1$ that depends on the realization of the $(h_i)$.

Similarly,
\begin{align*}
\Delta''_N(y)=& \frac 1N \sum_{i=1}^N X''_i(y)\\
= &
\frac 1N \sum_{i=1}^N \beta \mathrm{sech}^2\left(\sqrt \beta y+\beta h_i \right)-\E_h\left[\beta
\mathrm{sech}^2\left(\sqrt \beta y+\beta h\right)\right]
\end{align*}
and note that also the $X''_i(y)$ satisfy $\E_h(X''_i(y))=0$ by construction and $\E_h\left[\left(X''_i(y)\right)^2\right]< \infty$.
This again implies by
Law of the Iterated Logarithm that for any fixed $y \in [y_0-r,y_0+r]$ and any $\vep>0$
$$
\vert \Delta''_N(y) \vert \le \frac{N^{\vep}}{\sqrt N}
$$
for $N \ge N_2(\omega,y)$, $\P_h$-almost surely.

However, this time we need such a result uniformly in $y \in (y_0-r,y_0+r)$ since $\xi$ may depend on $N$ and we do not know where it is
located. To this end,
note that the functions $X_i''(\cdot)$ on the compact metric space $S=[a,b]$ satisfy the assumptions of Theorem \ref{theo:JD}.
Indeed, they are i.i.d. and, by construction, they are centered, hence satisfy \eqref{eq:centered}. Moreover, for each $y\in[a,b]$ we
have seen that $\E_h\left[\left(X''_i(y)\right)^2\right]< \infty$ and the variances are continuous in $y$, hence also
\eqref{eq:finvar} is satisfied.  Also notice that
$$
y \mapsto \beta \mathrm{sech}^2\left(\sqrt \beta y+\beta h_i\right)
$$
is Lipschitz-continuous for every realization of the $h_i$. Indeed, since $\frac d{dx}(\mathrm{sech(x)}^2)\le 1$ the Lipschitz-constant
of $y \mapsto \beta \mathrm{sech}^2\left(\sqrt \beta y+\beta h_i\right)$
is at most $\beta$, so that $M$ from Theorem~\ref{theo:JD} is square-integrable.

Finally, using the notation of Section 2, of course, $N([y_0-r,y_0+r], 2^{-n})\le 2r\cdot 2^n + 2$ such that
$$
\sum_{n=1}^\infty 2^{-n}H([y_0-r,y_0+r], 2^{-n})^{\frac 12}\le \mathrm{Const} \sum_{n=1}^\infty n 2^{-n}<\infty.
$$
Therefore, from Theorem \ref{theo:JD} we see that the random functions $y\mapsto \frac{1}{\sqrt N} \sum_{i=1}^N X''_i(y)
=\sqrt N \Delta_N''(y)$ converge weakly  to a centered Gaussian process on $\mathcal{C}([y_0-r,y_0+r])$. This implies for any $\vep'>0$ that
$$
\frac{1}{N^{1/2+\vep'}} \sum_{i=1}^N X''_i(\cdot)=N^{1/2-\vep'} \Delta_N''(\cdot) \to 0 \quad \mbox{in probability}.
$$
Using Theorem \ref{theo:deacosta} for the Banach space of continuous functions $\mathcal{C}([y_0-r,y_0+r])$ this implies that also
$$
\frac{1}{N^{1/2+\vep'}} \sum_{i=1}^N X''_i(\cdot)=N^{1/2-\vep'} \Delta_N''(\cdot) \to 0 \quad \mbox{$\P_h$-almost surely.}
$$
Hence for almost all realizations of $(h_i)=(h_i(\omega))$ and all $N \ge N_2(\omega)$ we obtain
\begin{equation}\label{eq:sup_Delta_N''}
\sup_{\xi \in [y_0-r,y_0+r]}|\Delta_N''(\xi)| \le \frac{N^{\vep'}}{\sqrt N}.
\end{equation}
Getting back to \eqref{eq:intGGN} and using that $G$ has a non-degenerate maximum in $y_0$ (and that $0\leq g(y)\leq 1$) we thus obtain for some $\chi>0$
\begin{align}\label{eq:longcomp}
&\int_{A_N} \exp\left(N G_N(y)\right) g(y)dy \\
=& \int_{A_N} \exp\left(N G(y)
+N\left(\Delta_N(y_0)+\Delta'_N(y_0)(y-y_0)+\frac 12 \Delta''_N(\xi)(y-y_0)^2\right)\right)g(y)dy\notag\\
\leq & \int_{A_N} \exp\left(N \left(G(y_0)-\frac{\chi}2 (y-y_0)^2\right)\right. \notag\\
& \qquad\qquad\left.
+N\left(G_N(y_0)-G(y_0)\right)+N\Delta'_N(y_0)(y-y_0)+\frac N2 \Delta''_N(\xi)(y-y_0)^2\right)dy\notag\\
=&e^{N G_N(y_0)} \int_{A_N} \exp\left(-N\frac{\chi}2 (y-y_0)^2
+N\Delta'_N(y_0)(y-y_0)+\frac N2 \Delta''_N(\xi)(y-y_0)^2\right)dy\notag\\
\le &\frac 1{\sqrt N} e^{N G_N(y_0)}  \left(\int_{-\infty}^{-N^\vep} \exp\left(-\frac{\chi}2 x^2
+\sqrt N \Delta'_N(y_0)x+\frac 12 \Delta''_N(\xi)x^2\right)dx\right.
\notag\\& \hskip 5.5cm\left.+\int_{N^\vep}^{\infty} \exp\left(-\frac{\chi}2
 x^2
+\sqrt N \Delta'_N(y_0)x+\frac 12 \Delta''_N(\xi)x^2\right)dx\right)\notag
\end{align}
by substituting $x=\sqrt N (y-y_0)$.

Since we have established that $|\Delta''_N(\xi)| \leq  N^{\vep'}/\sqrt N$ uniformly in $\xi \in A_N$ and that $|\sqrt N\Delta'_N(y_0)| \le \log N$, we can find an
$\chi'>0$ such that for $x\in (-\infty, -N^\vep) \cup (N^\vep, \infty)$
$$
-\frac{\chi}2 x^2
+\sqrt N \Delta'_N(y_0)x+\frac 12 \Delta''_N(\xi)x^2 \le -\frac{\chi'}2 x^2
$$
whenever $N \ge \max\{N_0(\omega), N_1(\omega), N_2(\omega)\}$. Hence, since
$$
\int_{-\infty}^{-N^\vep}\exp\left(-\frac{\chi'}2\frac{x^2}2\right)dx+
\int^{\infty}_{N^\vep}\exp\left(-\frac{\chi'}2\frac{x^2}2\right)dx \to 0
$$
as $N \to \infty$ we obtain
\be \label{eq:ANiso}
\int_{A_N} \exp\left(N G_N(y)\right) g(y)dy=o\left(\frac 1{\sqrt N} e^{N G_N(y_0)}\right).
\ee
Let us now turn to the remaining integral, which is the main contribution. Using Taylor's formula and recalling that $G'(y) = 0$, we have
$$
G(y) = G(y_0) + \frac {G''(\kappa)} 2 (y-y_0)^2,
$$
where $\kappa = \kappa(y)$ is a number between $y$ and $y_0$. Recall also the Taylor expansion~\eqref{eq:taylor_Delta_N} of $\Delta_N(y)$ where $\xi=\xi(y)$ is also between $y$ and $y_0$.
Using these expansions and the substitution $y= y_0+\frac x{\sqrt N}$  we obtain
\begin{align}\label{eq:newint}
&\int_{y_0-\frac{N^\vep}{\sqrt N}}^{y_0+\frac{N^\vep}{\sqrt N}} \exp\left(N G_N(y)\right) g(y)dy\notag\\
&=
\int_{y_0-\frac{N^\vep}{\sqrt N}}^{y_0+\frac{N^\vep}{\sqrt N}} \exp\left(N G(y)
+N\left(\Delta_N(y_0)+\Delta'_N(y_0)(y-y_0)+\frac 12 \Delta''_N(\xi)(y-y_0)^2\right)\right)g(y)dy\notag\\
%&=
%e^{N G_N(y_0)} \int_{y_0-\frac{N^\vep}{\sqrt N}}^{y_0+\frac{N^\vep}{\sqrt N}} \exp\left(
%N\left(\Delta'_N(y_0)(y-y_0)+\frac 12 \Delta''_N(\xi)(y-y_0)^2\right)\right)g(y)dy\\
&=e^{N G_N(y_0)} \int_{y_0-\frac{N^\vep}{\sqrt N}}^{y_0+\frac{N^\vep}{\sqrt N}}
\exp\left(N\frac{G''(\kappa)}2 (y-y_0)^2
+N\Delta'_N(y_0)(y-y_0)+\frac N2 \Delta''_N(\xi)(y-y_0)^2\right)g(y)dy \notag\\
&=\frac 1{\sqrt N}\exp\left(NG_N(y_0)\right)\int_{-N^\vep}^{N^\vep} \exp\left(\frac{G''(\kappa)}2 x^2
+\sqrt N \Delta'_N(y_0)x+\frac 12 \Delta''_N(\xi)x^2\right)g\left(y_0+\frac x{\sqrt N}\right) dx \notag\\
&=\frac {g(y_0)}{\sqrt N}\exp\left(NG_N(y_0)\right)\int_{-N^\vep}^{N^\vep} \exp\left(\frac{G''(\kappa)}2 x^2
+\sqrt N \Delta'_N(y_0)x\right)dx \cdot (1+o(1)),
\end{align}
since, by~\eqref{eq:sup_Delta_N''},  $x^2 |\Delta''_N(\xi)| \leq N^{2\vep + \vep'}/\sqrt N \to 0$ uniformly for $\xi \in [y_0-r,y_0+r]$, provided $\vep$ and $\vep'$ are small enough.  We also used that $g$ is continuous at $y_0$ with $g(y_0)\neq 0$.
%In the integral on the right-hand side of \eqref{eq:newint}
%we first apply a Taylor expansion to $g$ to obtain
%\begin{align*}
%&\int_{-N^\vep}^{N^\vep} \exp\left(-\frac{\eta}2 x^2
%+\sqrt N \Delta'_N(y_0)x\right)g\left(y_0+\frac x{\sqrt N}\right)dx\\
%=& \int_{-N^\vep}^{N^\vep} \exp\left(-\frac{\eta}2 x^2
%+\sqrt N \Delta'_N(y_0)x\right)\left(g\left(y_0\right)1+g'(y_0)\frac{x-y_0}{\sqrt N}+\mathcal{O}(1/N)\right)dx
%\\=&g\left(y_0\right)\int_{-N^\vep}^{N^\vep} \exp\left(-\frac{\eta}2 x^2 +\sqrt N \Delta'_N(y_0)x\right)dx (1+o(1))
%\end{align*}

Let $\eta:= - G''(y_0)>0$. Since $\kappa= \kappa (y)$ belongs to  the interval  $[y_0-\frac{N^\vep}{\sqrt N}, y_0+\frac{N^\vep}{\sqrt N}]$ and $G$ has continuous third derivative,  we have $x^2 |G''(\kappa) + \eta| < C\cdot \frac{N^{3\vep}}{\sqrt N}\to 0$ uniformly over $x\in [-N^\vep, N^\vep]$, provided $\vep$ is small enough. It follows that
\begin{multline*}
\int_{y_0-\frac{N^\vep}{\sqrt N}}^{y_0+\frac{N^\vep}{\sqrt N}} \exp\left(N G_N(y)\right) g(y)dy
\\=
\frac {g(y_0)}{\sqrt N}\exp\left(NG_N(y_0)\right) \int_{-N^\vep}^{N^\vep} \exp\left(-\frac \eta 2 x^2
+\sqrt N \Delta'_N(y_0)x\right)dx \cdot (1+o(1)),
\end{multline*}
The remaining integral is then computed by completing the square in the exponent:
\begin{align}\label{eq:remainint}
& \int_{-N^\vep}^{N^\vep} \exp\left(-\frac{\eta}2 x^2+\sqrt N \Delta'_N(y_0)x\right)dx \notag\\
=&
\exp\left(\frac{N}{2 \eta}(\Delta'_N(y_0))^2\right)
\int_{-N^\vep}^{N^\vep} \exp\left(-\frac{\eta}2 \left(x-\frac{\sqrt N}\eta\Delta'_N(y_0)\right)^2\right)dx \notag\\
=& \exp\left(\frac{N}{2 \eta}(\Delta'_N(y_0))^2\right)\int_{-N^\vep-\frac{\sqrt N}\eta \Delta'_N(y_0)}^{N^\vep-\frac{\sqrt N}\eta \Delta'_N(y_0)}
\exp\left(-\frac \eta 2 z^2\right)dz\notag\\
=&\frac{\exp\left(\frac{N}{2 \eta}(\Delta'_N(y_0))^2\right)}{\sqrt \eta}
\int_{-\frac{N^\vep}{\sqrt \eta}-\frac{\sqrt N}{\eta^{3/2}} \Delta'_N(y_0)}^{\frac{N^\vep}{\sqrt \eta}
-\frac{\sqrt N}{\eta^{3/2}} \Delta'_N(y_0)}\exp\left(-\frac{\zeta^2}2\right) d\zeta.
\end{align}
Recall that we already showed that $\sqrt N |\Delta'_N(y_0)| \le \log N$ for almost all realizations of the $(h_i)$ and an
$N \ge N_1(\omega)$ where again $\omega$ is the randomness for the $(h_i)$. This implies that
$$
\pm \frac{N^\vep}{\sqrt \eta}-\frac{\sqrt N}{\eta^{3/2}} \Delta'_N(y_0)\to \pm \infty
$$
almost surely. Therefore,
$$
\int_{-\frac{N^\vep}{\sqrt \eta}-\frac{\sqrt N}{\eta^{3/2}} \Delta'_N(y_0)}^{\frac{N^\vep}{\sqrt \eta}
-\frac{\sqrt N}{\eta^{3/2}} \Delta'_N(y_0)}\exp\left(-\frac{\zeta^2}2\right) d\zeta \to \sqrt{2 \pi}
$$
almost surely.

Altogether we have seen that
$$
\int_{y_0-\frac{N^\vep}{\sqrt N}}^{y_0+\frac{N^\vep}{\sqrt N}} \exp\left(N G_N(y)\right) g(y)dy
\sim
g(y_0)\sqrt{ \frac{2 \pi}{N\eta}}
\exp\left(N \left( G_N(y_0)+\frac 1{2 \eta} (\Delta'_N(y_0))^2\right)\right).
$$
Recalling~\eqref{eq:intrhs} (in which we can make $-a$ and $b$ large enough such that the $O$-term on the right-hand side is negligible with respect to $\eee^{N G_N(y_0)}$) and~\eqref{eq:ANiso}, we conclude that
\be
\int_{-\infty}^\infty \exp\left(N G_N(y)\right) g(y)dy\sim g(y_0)\sqrt{ \frac{2 \pi}{N\eta}}
\exp\left(N \left( G_N(y_0)+\frac 1{2 \eta} (\Delta'_N(y_0))^2\right)\right).
\ee
The same argument applies upon replacing $g(y)$ by $1$ and gives
\be
\int_{-\infty}^\infty \exp\left(N G_N(y)\right)dy\sim \sqrt{ \frac{2 \pi}{N\eta}}
\exp\left(N \left( G_N(y_0)+\frac 1{2 \eta} (\Delta'_N(y_0))^2\right)\right).
\ee
With these asymptotics we get back to \eqref{eq:rep_marg} to see that
\begin{align}\label{eq:finresult1}
\mu_{N,k}(\tilde \sigma)&=
\frac{ \int_{-\infty}^\infty \exp\left(N G_N(y)\right) g(y) dy}{\int_{-\infty}^\infty \exp\left(N G_N(y)\right) dy} \notag \\
=&  g(y_0)(1+o(1))\notag\\
=&  \prod_{i=1}^k\frac{\exp\left( \sigma_i\left(\sqrt \beta y_0+\beta h_i \right)\right)}
{2\cosh\left(\sqrt \beta y_0+\beta h_i \right)}(1+o(1))
\end{align}
with an $o(1)$-term that is necessarily uniform in the $(\sigma_1, \ldots,\sigma_k)\in\{-1,+1\}^k$.
Notice that for any fixed $(\sigma_1, \ldots,\sigma_k)$ the right-hand side in \eqref{eq:finresult1} agrees with
$\varrho^{(k)}(\sigma_1, \ldots,\sigma_k)$ as defined in Theorem \ref{theo:main1}.
Therefore
\begin{align}\label{eq:entropy}
H(\mu_{N,k}|\varrho^{(k)})=&\sum_{\tilde \sigma \in \{-1,+1\}^k} \log\left(\frac{\mu_{N,k}(\tilde \sigma)}{\varrho^{(k)}(\tilde \sigma)}\right)\mu_{N,k}(\tilde \sigma)\notag\\
=& \sum_{\tilde \sigma \in \{-1,+1\}^k} \log\left(\frac{\varrho^{(k)}(\tilde \sigma)(1+o(1)}{\varrho^{(k)}(\tilde \sigma)}\right)\mu_{N,k}(\tilde \sigma)\\
=& o(1)\sum_{\tilde \sigma \in \{-1,+1\}^k}\mu_{N,k}(\tilde \sigma)\notag
\end{align}
and the right-hand side converges to $0$, as $N\to \infty$.
\end{proof}

\begin{remark}\label{rem:increasing}
If $k=o(N^{\frac 12 - \vep})$ (for any fixed $\vep >0$) is allowed to increase with $N$, then the above proof applies with minor changes. Besides the condition $0\leq g(y)\leq 1$ which is clearly fulfilled we need to justify~\eqref{eq:newint}. To this end, we need to show that
$$
\lim_{N\to\infty} \frac{g\left(y_0+\frac x{\sqrt N}\right)}{g(y_0)} = 1
$$
uniformly in $|x|\leq N^\vep$ and $(\sigma_1,\ldots, \sigma_k)\in \{-1,1\}^k$. Taking the logarithms and putting $y = y_0+\frac x{\sqrt N}$, this amounts to showing that
$$
\sum_{i=1}^k \log(1 + e^{-2\sigma_i(\sqrt \beta y_0 + \beta h_i)}) - \log(1 + e^{-2\sigma_i(\sqrt \beta y + \beta h_i)}) \to 0
$$
uniformly in $y$ such that $|y-y_0|\leq  N^{\vep - \frac 12}$ and $(\sigma_1,\ldots, \sigma_k)\in \{-1,1\}^k$. Since the function $\log (1 + e^{-x})$ is $1$-Lipschitz, we can estimate the sum on the left-hand side  by $2 k \sqrt \beta |y-y_0|$, which converges to $0$ due to our assumption on the growth of $k$.
\end{remark}

Let us next see what changes when the maximum in the above proof is degenerate.

\begin{proof}[Proof of Theorem \ref{theo:main1b}]
Recall that we are still in the situation where the function $G$ possesses a unique maximum $y_0$, but this time $y_0$ is
$n$-degenerate for some $n \ge 2$. We follow the proof of Theorem \ref{theo:main1} until \eqref{eq:intrhs}. However, now we
decompose
$$[y_0-r,y_0+r]= A_N \mathop{\dot{\cup}} \left(y_0-N^{\vep-1/(2n)}, y_0+N^{\vep-1/(2n)}\right),$$
where, of course,
$$
A_N:=[y_0-r,y_0+r]\setminus \left(y_0-N^{\vep-\frac 1{2n}}, y_0+N^{\vep-\frac 1{2n}}\right).
$$
Let us again analyze the contribution of $A_N$ to the integral:
Keeping our definition of $\Delta_N$ from \eqref{eq:Delta} we can simply copy \eqref{eq:intGGN}. However,
now we take a Taylor expansion of $\Delta_N$ up to order $2n$ to obtain
\begin{multline*}
\Delta_N(y)=\Delta_N(y_0)+\Delta'_N(y_0)(y-y_0)+\frac 12 \Delta''_N(y_0)(y-y_0)^2+\ldots \\+\frac 1{(2n-1)!}\Delta^{(2n-1)}_N(y_0)
(y-y_0)^{2n-1}+\frac 1{(2n)!}\Delta^{(2n)}_N(\xi)
(y-y_0)^{2n}
\end{multline*}
for some $\xi \in [y_0,y]$ or $\xi \in[y,y_0]$.

Note that with the notation of the proof of Theorem \ref{theo:main1} the terms $\Delta^{(l)}_N(y_0), l=1, \ldots 2n-1$ are of the form
$$
\Delta^{(l)}_N(y_0)=\frac 1 N\sum_{i=1}^N X_i^{(l)}(y_0)
$$
where for fixed $l$ the random variables $(X_i(y_0))_{i=1}^N$ are bounded i.i.d. random variables. Hence by the Law of the Iterated
Logarithm we obtain that
$$
\lvert\Delta^{(l)}_N(y_0)\rvert \le \frac{\log N}{\sqrt N} \quad \mbox{for all } l=1, \ldots 2n-1
$$
and for all $N \ge N_0(\omega)$ $\P$-almost surely.

Also, by the same arguments that we applied to obtain $\lvert\Delta^{''}_N(\xi)\rvert \le \frac{N^{\vep'}}{N}$  uniformly
in $\xi \in [y_0-r,y_0+r]$ in the proof of Theorem
\ref{theo:main1} we can see that here
$$
\lvert\Delta^{(2n)}_N(\xi)\rvert \le \frac{N^{\vep'}}{\sqrt N} \quad \mbox{uniformly in } \xi \in [y_0-r,y_0+r]
$$
for some $\vep'>0$ sufficiently small and and for all $N \ge N_1(\omega)$ $\P$-almost surely.

Recall that the derivatives of order up to $2n-1$ of $G$ in $y_0$ vanish.
By a computation similar to \eqref{eq:longcomp} we obtain that there is $\eta>0$ such that
\begin{align}\label{eq:longcomp2}
&\int_{A_N} \exp\left(N G_N(y)\right) g(y)dy \\
%=& \int_{A_N} \exp\left(N G(y)
%+N\left(\Delta_N(y_0)+\Delta'_N(y_0)(y-y_0)+\frac 12 \Delta''_N(\xi)(y-y_0)^2\right)\right)g(y)dy\notag\\
\leq & \int_{A_N} \exp\left(N \left(G(y_0)- \eta (y-y_0)^{2n}\right)\right. \notag\\
& \qquad\qquad\left.
+N\left(G_N(y_0)-G(y_0)\right)+N\Delta'_N(y_0)(y-y_0)+\ldots+\frac N{(2n)!} \Delta^{(2n)}_N(\xi)(y-y_0)^{2n}\right)g(y)dy\notag\\
=&e^{N G_N(y_0)} \int_{A_N} \exp\left(-N \eta (y-y_0)^{2n}
+N\Delta'_N(y_0)(y-y_0)+\ldots+\frac N{(2n)!} \Delta^{(2n)}_N(\xi)(y-y_0)^{2n}\right)g(y)dy\notag\\
\le &N^{-\frac 1{2n}} e^{N G_N(y_0)}\notag\\
&\quad\left(\int_{-\infty}^{-N^\vep} \exp\left(-\eta x^{2n}
+N^{1-\frac 1{2n}}\Delta'_N(y_0)x+N^{1-\frac 2{2n}} \frac 1 {2!}\Delta''_N(y_0)x^2+\ldots+\frac 1{(2n)!} \Delta^{(2n)}_N(\xi)x^{2n}\right)dx\right.
\notag\\
&\qquad +
\left.\int^{\infty}_{N^\vep} \exp\left(-\eta x^{2n}
+N^{1-\frac 1{2n}}\Delta'_N(y_0)x+N^{1-\frac 2{2n}}\frac 1 {2!} \Delta''_N(y_0)x^2+\ldots+\frac 1{(2n)!} \Delta^{(2n)}_N(\xi)x^{2n}\right)dx\right)
\notag
\end{align}
by setting $x=N^{\frac1{2n}}(y-y_0)$. We used that $0\leq g(y)\leq 1$.

Take some (small) $c>0$. By the above considerations, for each $l=1, \ldots, 2n-1$ we have that for all $N$ large enough and almost all realizations
of the $(h_i)$
$$
\lvert N^{1-\frac l{2n}} \Delta^{(l)}_N(y_0)x^{l} \rvert\le \lvert N^{\frac{n-l}{2n}}\log Nx^{l} \rvert \le c x^{2n}
$$
for $|x| \ge N^{\vep}$, if we take $\vep > \frac{n-1}{2n(2n-1)}$ (later we shall also need that $\vep < \frac 1{2n}$; note that these conditions are consistent).
Also note that $ \Delta^{(2n)}_N(\xi) \to 0$ uniformly in $\xi \in [y_0-r,y_0+r]$. Therefore there is $\eta'>0$ such that
\begin{multline*}
\int_{A_N} \exp\left(N G_N(y)\right) g(y)dy \\
\le N^{-\frac 1{2n}} e^{N G_N(y_0)}
\left(\int_{-\infty}^{-N^\vep} \exp\left(- \eta' x^{2n}\right)dx+\int^{\infty}_{N^\vep} \exp\left(-\eta' x^{2n}\right)dx\right)
\end{multline*}
for all $N$ large enough and $\P$-almost surely in the $(h_i)$. Hence,
\begin{equation}\label{eq:tech_123}
\int_{A_N} \exp\left(N G_N(y)\right) g(y)dy=o( N^{-\frac 1{2n}} e^{N G_N(y_0)}e^{-\eta' N^{2n\vep}} ).
\end{equation}

For the remaining integral over $[y_0-N^{\vep-\frac 1{2n}}, y_0+N^{\vep-\frac 1{2n}}]$  (where we shall assume that $\vep < \frac 1 {2n}$) we perform the same
transformations as above (similar to what we did in \eqref{eq:newint}) to obtain, for some $\kappa = \kappa(y)$ between $y_0$ and $y$,
\begin{align}\label{eq:innerinttheo3}
&\int_{y_0-N^{\vep-\frac 1{2n}}}^{y_0+N^{\vep-\frac 1{2n}}} \exp\left(N G_N(y)\right) g(y)dy\notag\\
=&
\int_{y_0-N^{\vep-\frac 1{2n}}}^{y_0+N^{\vep-\frac 1{2n}}}  \exp\left(N \left( G(y_0) + \frac{G^{(2n)}(\kappa)}{(2n)!} (y-y_0)^{2n} \right)\right. \notag\\
& \qquad\qquad\left.
+N\left(G_N(y_0)-G(y_0)\right)+N\Delta'_N(y_0)(y-y_0)+\ldots+\frac N{(2n)!} \Delta^{(2n)}_N(\xi)(y-y_0)^{2n}\right)g(y)dy\notag\\
=&N^{-\frac 1{2n}}\exp\left(NG_N(y_0)\right)\\
&\quad\times
\int_{-N^\vep}^{N^\vep} e^{\frac{G^{(2n)}(\kappa)}{(2n)!} x^{2n}%\right)
+N^{1-\frac 1{2n}}\Delta'_N(y_0)x+N^{1-\frac 2{2n}}\frac 1 {2!}\Delta''_N(y_0)x^2+\ldots+\frac 1{(2n)!} \Delta^{(2n)}_N(\xi)x^{2n}}
g(y_0+xN^{-\frac 1 {2n}})dx\notag\\
=& N^{-\frac 1{2n}}\exp\left(NG_N(y_0)\right) g(y_0)\notag \\
&\quad \times \int_{-N^\vep}^{N^\vep} e^{\frac{G^{(2n)}(\kappa)}{(2n)!} x^{2n}%\right)
+N^{1-\frac 1{2n}}\Delta'_N(y_0)x+N^{1-\frac 2{2n}}\frac 1 {2!}\Delta''_N(y_0)x^2+\ldots+\frac 1{(2n-1)!} \Delta^{(2n-1)}_N(\xi)x^{2n-1}}
dx \cdot (1+o(1)).\notag
\end{align}
%Again the integral on the right-hand side can be approximated by a Taylor expansion of the function $g$:
%\begin{align}\label{eq:end1a}
%&\int_{-N^\vep}^{N^\vep} e^{\frac{G^{(2n)}(\kappa)}{(2n)!} x^{2n}%\right)
%+N^{1-\frac 1{2n}}\Delta'_N(y_0)x+N^{1-\frac 2{2n}}\Delta''_N(y_0)x^2+\ldots+\frac 1{(2n-1)!} \Delta^{(2n-1)}_N(\xi)x^{(2n-1)}}
%g(y_0+xN^{-\frac 1 {2n}})dx \notag\\
%=&\int_{-N^\vep}^{N^\vep} e^{\frac{G^{(2n)}(\kappa)}{(2n)!} x^{2n}%\right)
%+N^{1-\frac 1{2n}}\Delta'_N(y_0)x+N^{1-\frac 2{2n}}\Delta''_N(y_0)x^2+\ldots+\frac 1{(2n-1)!} \Delta^{(2n-1)}_N(\xi)x^{(2n-1)}}\notag\\
%& \hskip 6cm \times
%\left(g\left(y_0\right)+g'(y_0)\frac{x-y_0}{N^{\frac 1 {2n}}}+\mathcal{O}\left((x-y_0)^2/N^{4n}\right)\right)dx\notag\\
%=&g(y_0)\int_{-N^\vep}^{N^\vep} e^{\frac{G^{(2n)}(\kappa)}{(2n)!} x^{2n}%\right)
%+N^{1-\frac 1{2n}}\Delta'_N(y_0)x+N^{1-\frac 2{2n}}\Delta''_N(y_0)x^2+\ldots+\frac 1{(2n-1)!} \Delta^{(2n-1)}_N(\xi)x^{(2n-1)}}dx
%(1+o(1))
%\end{align}
%because the moments of the integral on the right-hand side are finite.
Unfortunately the integral on the right-hand side
cannot be evaluated as easily as in \eqref{eq:remainint}, because the argument of the exponent is a polynomial of order at least 4, and
hence there is no square to complete in the exponent. However, we do not need the value of this integral for the proof of Theorem
\ref{theo:main1b} since it will occur in both, the numerator and the denominator, in \eqref{eq:rep_marg} (the reason why we computed
the corresponding integral in \eqref{eq:remainint} is that we will need this step in the proof of Theorem \ref{theo:main2}).
More precisely, for some (small) $c>0$ we have the lower estimate:
\begin{align*}
&\int_{-N^\vep}^{N^\vep} e^{\frac{G^{(2n)}(\kappa)}{(2n)!} x^{2n}%\right)
+N^{1-\frac 1{2n}}\Delta'_N(y_0)x+N^{1-\frac 2{2n}}\frac 1 {2!}\Delta''_N(y_0)x^2+\ldots+\frac 1{(2n-1)!} \Delta^{(2n-1)}_N(\xi)x^{2n-1}}
dx\\
&\geq
c \int_{1}^{2}e^{-c N^{1- \frac 1 {2n}}(\log N) /\sqrt N} dx
\geq c e^{-c N^{(2n-1)\eps}}
\end{align*}
since $\vep > \frac{n-1}{2n(2n-1)}$. Recall also that $(2n-1)\eps < 1$. This implies that the right-hand side of~\eqref{eq:tech_123} and the $O$-term in~\eqref{eq:intrhs} (where we take $r>0$ large enough) are negligible with respect to~\eqref{eq:innerinttheo3}. It follows that
\begin{multline*}
\int_{-\infty}^{+\infty} \exp\left(N G_N(y)\right) g(y)dy
=
N^{-\frac 1{2n}}\exp\left(NG_N(y_0)\right) g(y_0)\\
\times \int_{-N^\vep}^{N^\vep} e^{\frac{G^{(2n)}(\kappa)}{(2n)!} x^{2n}%\right)
+N^{1-\frac 1{2n}}\Delta'_N(y_0)x+N^{1-\frac 2{2n}}\frac 1 {2!}\Delta''_N(y_0)x^2+\ldots+\frac 1{(2n-1)!} \Delta^{(2n-1)}_N(\xi)x^{2n-1}}
dx \cdot (1+o(1)).
\end{multline*}
The same argument, with $g(y)$ replaced by $1$, yields
\begin{multline*}
\int_{-\infty}^{+\infty} \exp\left(N G_N(y)\right)dy
=
N^{-\frac 1{2n}}\exp\left(NG_N(y_0)\right)\\
\times \int_{-N^\vep}^{N^\vep} e^{\frac{G^{(2n)}(\kappa)}{(2n)!} x^{2n}%\right)
+N^{1-\frac 1{2n}}\Delta'_N(y_0)x+N^{1-\frac 2{2n}}\frac 1 {2!}\Delta''_N(y_0)x^2+\ldots+\frac 1{(2n-1)!} \Delta^{(2n-1)}_N(\xi)x^{2n-1}}
dx \cdot (1+o(1)).
\end{multline*}
Note that $\kappa$ (which is a function of $y$ and hence $x$ and $N$) is the same in both formulas. Using~\eqref{eq:rep_marg} we obtain:
\begin{align*}
\mu_{N,k}(\tilde \sigma) =&\sum_{\sigma_{k+1}, \ldots \sigma_N\in \{-1,+1\}}\frac{\exp\left(-\beta H_N(\sigma)\right)}{Z_N}\\
=&
\frac{ \int_{-\infty}^\infty \exp\left(N G_N(y)\right) g(y) dy}{\int_{-\infty}^\infty \exp\left(N G_N(y)\right) dy}\\
=& \frac{g(y_0)\int_{-N^\vep}^{N^\vep} e^{\frac{G^{(2n)}(\kappa)}{(2n)!} x^{2n}%\right)
+N^{1-\frac 1{2n}}\Delta'_N(y_0)x+N^{1-\frac 2{2n}}\frac 1 {2!}\Delta''_N(y_0)x^2+\ldots+\frac 1{(2n-1)!} \Delta^{(2n-1)}_N(\xi)x^{2n-1}}dx
(1+o(1))} {\int_{-N^\vep}^{N^\vep} e^{\frac{G^{(2n)}(\kappa)}{(2n)!} x^{2n}%\right)
+N^{1-\frac 1{2n}}\Delta'_N(y_0)x+N^{1-\frac 2{2n}}\frac 1 {2!}\Delta''_N(y_0)x^2+\ldots+\frac 1{(2n-1)!} \Delta^{(2n-1)}_N(\xi)x^{2n-1}}dx
(1+o(1))}\\
=&  g(y_0) (1+o(1))\\
=& (1+o(1)) \prod_{i=1}^k\frac{\exp\left( \sigma_i\left(\sqrt \beta y_0+\beta h_i \right)\right)}
{2\cosh\left(\sqrt \beta y_0+\beta h_i \right)}.
\end{align*}
The proof again concludes with the observation that for fixed $(\sigma_1, \ldots,\sigma_k)$ the right-hand side agrees with
$\varrho^{(k)}(\sigma_1, \ldots,\sigma_k)$ as defined in Theorem \ref{theo:main1b} together with the same observation as in \eqref{eq:entropy}.
\end{proof}
Let us finally turn to the proof of Theorem \ref{theo:main2}.
\begin{proof}
Recall that now $G$ possesses several non-degenerate global
maxima $y_0, \ldots ,y_s$. Again we follow the proof of Theorem \ref{theo:main1} by rewriting $\mu_{N,k}$ as in
\eqref{eq:rep_marg} until \eqref{eq:intrhs}. Here we take the interval $[a,b]$ so large that $(a,b)$ contains all the points $y_0, \ldots,y_s$.

First, for each $j=0, \ldots ,s$ take an interval $[a_j,b_j]\subset [a,b]$ such that $y_j$ is the only maximizer of $G$ in the interval $[a_j,b_j]$, for each $j=0, \ldots, s$.
%and such that $G \vert [a_j,b_j]$ is concave
Then for any bounded function $g$ with $0\leq g(y)\leq 1$ and $A:= [a,b]\setminus \bigcup_{j=0}^s [a_j,b_j]$
the contribution of
$$\int_A \exp\left(N G_N(y)\right) g(y) dy \qquad \mbox{to }\quad\int_{a}^b \exp\left(N G_N(y)\right) g(y) dy$$
is negligible as $N \to \infty$. Indeed, since $G$ is continuous on the compact interval $[a,b]$ and $\bigcup_{j=0}^s [a_j,b_j]$
contains all the global maxima of $G$
$$
\sup_{y \in [a,b]} G(y)-\sup_{y \in A} G(y)>0
$$
and obviously
$$
\int_A \exp\left(N G_N(y)\right) g(y) dy \le (b-a) e^{N \sup_{y \in A} G(y)}
= O\left(\frac{e^{NG(y_0)}}{\sqrt N}\right).
$$
The right-hand side will be seen to be smaller or equal than the order of $\int_{a}^b \exp\left(N G_N(y)\right) g(y) dy$.

Next, for each $j=0, \ldots, s$ and
$\vep>0$ small enough we take an open neighborhood $(y_j-\frac{N^{\vep}}{\sqrt N},y_j+\frac{N^{\vep}}{\sqrt N})$ of $y_j$. For $N$ large enough all this neighborhood will be
contained in $[a_j,b_j]$.
We want to see further  that the integral $\int_{a}^b \exp\left(N G_N(y)\right) g(y) dy$
gets its entire contribution (up to a negligible part) from $\bigcup_{j=0}^s (y_j-\frac{N^{\vep}}{\sqrt N},y_j+\frac{N^{\vep}}{\sqrt N})$.
However, this can be done by repeating the arguments that were used in the proof of Theorem \ref{theo:main1} $s+1$ times
and by replacing
$A_N$ by $[a_j,b_j]\setminus (y_j-\frac{N^{\vep}}{\sqrt N},y_j+\frac{N^{\vep}}{\sqrt N})$ for each $j=0, \ldots, s$ in the
steps from \eqref{eq:AN} to \eqref{eq:ANiso}.

It remains to compute the asymptotics of the integrals
$$
\int_{y_j-\frac{N^{\vep}}{\sqrt N}}^{y_j-\frac{N^{\vep}}{\sqrt N}} \exp\left(N G_N(y)\right) g(y) dy \qquad j=0, \ldots, s.
$$
Applying the computations in \eqref{eq:newint} and \eqref{eq:remainint} to each of these integrals we find that
\be
\int_{y_j-\frac{N^{\vep}}{\sqrt N}}^{y_j-\frac{N^{\vep}}{\sqrt N}} \exp\left(N G_N(y)\right) g(y)dy = g(y_j)\sqrt{ \frac{2 \pi}{N\eta_j}}
\exp\left(N \left( G_N(y_j)+\frac {(\Delta'_N(y_j))^2}{2 \eta_j} \right)\right)(1+o(1))
\ee
where $\eta_j = - G''(y_j) >0$ by assumption on $G$ and $y_j$. Therefore, making use of \eqref{eq:rep_marg} we obtain for any
fixed $k\in \N$, any
fixed $\tilde \sigma = (\sigma_1, \ldots, \sigma_k)$ and $\P_h$-almost any realization of the $(h_i)$ that
\begin{align}\label{eq:severalmax}
\mu_{N,k}(\tilde \sigma)=& \frac{\sum_{j=0}^s  g(y_j)\sqrt{ \frac{2 \pi}{N\eta_j}}
\exp\left(N \left( G_N(y_j)+\frac 1{2 \eta_j} (\Delta'_N(y_j))^2\right)\right)(1+o(1))}{\sum_{j=0}^s  \sqrt{ \frac{2 \pi}{N\eta_j}}
\exp\left(N \left( G_N(y_j)+\frac 1{2 \eta_j} (\Delta'_N(y_j))^2\right)\right)(1+o(1))}\notag \\
=& \frac{\sum_{j=0}^s  g(y_j)\sqrt{ \frac{2 \pi}{N\eta_j}}
\exp\left(N \left( G(y_j)+\Delta_N(y_j)+\frac 1{2 \eta_j} (\Delta'_N(y_j))^2\right)\right)(1+o(1))}{\sum_{j=0}^s  \sqrt{ \frac{2 \pi}{N\eta_j}}
\exp\left(N \left( G(y_j)+\Delta_N(y_j)+\frac 1{2 \eta_j} (\Delta'_N(y_j))^2\right)\right)(1+o(1))}\notag \\
=&\frac{\sum_{j=0}^s  g(y_j)\sqrt{ \frac{1}{\eta_j}}
\exp\left(N\Delta_N(y_j)+\frac N{2 \eta_j} (\Delta'_N(y_j))^2\right)(1+o(1))}{\sum_{j=0}^s  \sqrt{ \frac{1}{\eta_j}}
\exp\left(N\Delta_N(y_j)+\frac N{2 \eta_j} (\Delta'_N(y_j))^2\right)(1+o(1))}
\end{align}
since the values of $G(y_j), j=0, \ldots, s$ are the same. %It is actually $G_N$???

Now observe that by the Law of the Iterated Logarithm, similar to \eqref{eq:LIL1}
$$
\frac N{2 \eta_j} (\Delta'_N(y_j))^2\le \log^2(N)
$$
for all $j=0, \ldots, s$ and $N$ large enough (depending on $j$ and the realization of the $(h_i)$).

On the other hand, by the Central Limit Theorem, the random vector with components
\be
\sqrt N \Delta_N(y_j):=
\frac 1 {\sqrt N} \sum_{i=1}^N \left(\log \cosh\left(\sqrt \beta y_j+\beta h_i \right)
-\E_h \left[\log \cosh\left(\sqrt \beta y_j+\beta h\right)\right]\right),
%\frac 1{\sqrt N}\frac N{2 \eta_j} \Delta'_N(y_j)=\frac{1}{2 \eta_j}
%\frac{1}{\sqrt N}\sum_{i=1}^N \left (\sqrt \beta\tanh\left(\sqrt \beta y_0+\beta h_i\right)-\E_h\left[\sqrt
%\beta\tanh\left(\sqrt \beta y_0+\beta h \right)\right] \right),
\end{equation}
where $j=0, \ldots, s$, converges in distribution to a multivariate Gaussian random vector
$(\xi_j)_{j=0}^s$ with $\E \xi_j = 0$. Let us argue that $\xi_{j_1}-\xi_{j_2}$ has nonzero variance provided $j_1\neq j_2$. For concreteness, we consider $\xi_0 - \xi_1$. Without loss of generality, let $y_0 < y_1$.   By the Central Limit Theorem, $\mathrm{Var}\, (\xi_0 - \xi_1) = \mathrm{Var}\, q(h_1)$, where
$$
q(h) := \log \cosh\left(\sqrt \beta y_0+\beta h \right) - \log \cosh\left(\sqrt \beta y_1+\beta h \right).
$$
The derivative of $q(h)$ in $h$ is
$$
q'(h) = \beta \tanh \left(\sqrt \beta y_0+\beta h \right) - \beta \tanh \left(\sqrt \beta y_1+\beta h \right) <0
$$
since $\tanh(x)$ is an increasing function. Note that $h_1$ is not a.s.\ constant because we assume that $\E h_1^2 \neq 0$ and $G$ has more than one global maximum, which is not possible in the classical Curie-Weiss model if the external field is non-zero. It follows that $q$ is not constant on the support of $h_1$ and hence $\Var q(h_1) >0$. Hence $\mathrm{argmax}_{j=0,\ldots, s} \xi_j$ is uniquely defined with probability $1$.

Let  ${J_N}:= \mathrm{argmax}_{j\in 0,\ldots, s} \sqrt N \Delta_N(y_j)$. If, for some value of $N$, there is no unique maximizer of the variables $\sqrt N \Delta_N(y_j)$, $j=0,\ldots,s$, we just take any index
$J_N$ in which $\sqrt N \Delta_N(y_j)$ is maximized.

Let $\delta >0$. Recall that $(\sqrt N \Delta_N(y_j))_{j=0,\ldots, s}$ converges weakly to $(\xi_j)_{j\in 0,\ldots, s}$ and that the latter random vector has a.s.\ pairwise different components. By the continuous mapping theorem, there is $\eps>0$ such that for all sufficiently large $N$, we have
$$
\mathbb P\left(A_N\right) >1-\frac \delta2,
\text{ where }
A_N:=  \left\{\sqrt N \Delta_N(y_{J_N}) - \max_{j\neq J_N} \sqrt N \Delta_N(y_{j}) > \eps\right\}.
$$
We have shown above that, for sufficiently large $N$,
$$
\mathbb P\left(B_N\right) >1-\frac \delta2,
\text{ where }
B_N:=
\left\{\max_{j=0,\ldots, s} \frac N{2 \eta_j} (\Delta'_N(y_j))^2\le \log^2(N)\right\}.
$$
It follows that for all $j\neq J_N$ on the event $A_N\cap B_N$  we have
\begin{align*}
\exp\left(N\Delta_N(y_j)+\frac N{2 \eta_j} (\Delta'_N(y_j))^2\right)
&\leq
\exp\left(N\Delta_N(y_{J_N}) - \sqrt N \eps + \frac N{2 \eta_j} (\Delta'_N(y_j))^2\right)\\
&=
o\left(\exp\left(N\Delta_N(y_{J_N})+\frac N{2 \eta_{J_N}} (\Delta'_N(y_{J_N}))^2\right)\right).
\end{align*}

%
%\ML{Aber könnte sich das nicht für verschiedene $N$ ändern?}
%Also, $|N \Delta_N(y_J)|$ will be of order $\sqrt N$, hence much larger than $N \Delta_N(y_j)$ for every $j=0,\ldots,s, j \neq J$
%and also much larger than
%$\log^2(N)$ and, therefore, the contribution of the
%$\frac N{2 \eta_{j}} \Delta'_N(y_{j})$-terms, $j=0, \ldots, s$.

Thus, on the event $A_N\cap B_N$ (whose probability bounded from below by $1-\delta$),  both, the numerator and the denominator
in \eqref{eq:severalmax} are dominated by the contribution of the (random) summand with index $j=J_N$. That is to say
\begin{align*}
\mu_{N,k}(\tilde \sigma)
=&\frac{g(y_{J_N})\sqrt{ \frac{1}{\eta_{J_N}}}
\exp\left(N\Delta_N(y_{J_N})+\frac N{2 \eta_{J_N}} (\Delta'_N(y_{J_N}))^2\right)(1+o(1))}{\sqrt{ \frac{1}{\eta_{J_N}}}
\exp\left(N\Delta_N(y_{J_N})+\frac N{2 \eta_{J_N}} (\Delta'_N(y_{J_N}))^2\right)(1+o(1))}\\
=&g(y_{J})(1+o(1))
\end{align*}
Again for any $\tilde\sigma \in \{-1,+1\}^k$, by definition of the function $g$ we have $g(y_{{J_N}})=\varrho_{J_N}^{(k)}(\tilde \sigma)$ and the proof finishes along the lines
of \eqref{eq:entropy} with the slight difference that the convergence in the theorem is now in probability.
\end{proof}

\section{Example: The Random-Field Curie-Weiss model with dichotomous external field}\label{sec:example}
In this section we will specify our results to an example in which the phase diagram has been studied in detail (see \cite{AGZ92},
Section 5), the Random-Field Curie-Weiss model with dichotomous external field. Here the random variables $(h_i)_i$ are i.i.d. with
$$
\P_h(h_i=\mathfrak{h})=\P_h(h_i=-\mathfrak{h})=\frac 12
$$
for a $\mathfrak{h} \ge 0$.
Even in this very simple situation the phase diagram is much richer than in the situation of the ordinary Curie-Weiss model, which we
re-obtain by setting $\mathfrak{h}=0$. Of course, in the Curie-Weiss model, we re-obtain the known propagation of chaos-results, even
though not exactly the same {\it increasing} propagation of chaos result as in \cite{BAZ_chaos}.

For $\mathfrak{h}>0$ the size of $\mathfrak{h}$ influences the phase diagram: If $\mathfrak{h}\ge\frac 12 $, then $G$ has a unique
maximum at $y=0$. On the other hand, if $\mathfrak{h}<\frac 12$, there is an increasing function $f:[0,\frac 12) \to \R$ such that
$f(0)=1$ and $f(x) \to \infty$, whenever $x \to \frac 12$ such that
\begin{enumerate}
\item If $\beta < f(\mathfrak{h})$, then $G$ has an unique non-degenerate global maximum at $y=0$.
\item If $\beta= f(\mathfrak{h})$ and  $\mathfrak{h} \le \frac 2 3 \mathrm{arcosh}\left( \sqrt{\frac32}\right)$,
then $G$ has an unique global maximum at $y=0$.
\item $\beta= f(\mathfrak{h})$ and  $\mathfrak{h} > \frac 2 3 \mathrm{arcosh}\left( \sqrt{\frac32}\right)$,
then the function $G$
has three non-degenerate global maxima, one of which is $y=0$ while the other two, say $y_1$ and $y_{-1}$ are symmetric around the origin,
hence $y_1=-y_{-1}$.
\item If $\beta> f(\mathfrak{h})$ then $G$ has two non-degenerate symmetric global minima $y_1$ and $y_{-1}=-y_1$.
\end{enumerate}
Consequently, the system is in a paramagnetic phase, whenever $\mathfrak{h} \ge \frac 12$ or both
$\mathfrak{h}<\frac 12$ and $\beta < f(\mathfrak{h})$.

In case 4) above the system is in a ferromagnetic phase.

In the region, where $\mathfrak{h}<\frac 12$ and $\beta= f(\mathfrak{h})$ there are again three scenarios. If
$\mathfrak{h} <\frac 2 3 \mathrm{arcosh}\left( \sqrt{\frac32}\right)$ there is a second order phase transition, i.e.\
the unique global maximum of $G$ at $y=0$ is $2$-degenerate. If
$\mathfrak{h} > \frac 2 3 \mathrm{arcosh}\left( \sqrt{\frac32}\right)$ there is a first order phase transition, i.e.\
the three global maxima of the function $G$ are non-degenerate. Finally, the point $\mathfrak{h} =\frac 2 3 \mathrm{arcosh}\left( \sqrt{\frac32}\right)$
and $\beta= f(\mathfrak{h})$ is a so-called tri-critical point, i.e. the unique global maximum of $G$ at $y=0$ is $3$-degenerate.

Therefore, our results in Theorems \ref{theo:main1}, \ref{theo:main1b}, and \ref{theo:main2} can be translated to this example
in the following way:

\begin{corollary}
For the Random-Field Curie-Weiss model with dichotomous external field the following propagation of chaos results hold true
\begin{enumerate}
\item In the situation where
\begin{itemize}
\item Either $\mathfrak{h} \ge \frac 12$,
\item or $\mathfrak{h}<\frac 12$ and $\beta < f(\mathfrak{h})$,
\item or
$\beta= f(\mathfrak{h})$ and  $\mathfrak{h} \le \frac 2 3 \mathrm{arcosh}\left( \sqrt{\frac32}\right)$
\end{itemize}
let
\be \label{eq:rho0}
\varrho_0^{(k)}(\sigma_1, \ldots, \sigma_k):= \prod_{i=1}^k\frac{\exp\left(\sigma_i\left(\beta h_i \right)\right)}
{2\cosh\left(\beta h_i \right)}.
\ee
Then, for any fixed $k$ and almost all realizations of the random external field $(h_i)$ we have
$$
H(\mu_{N,k}|\varrho_0^{(k)}) \to 0 \qquad \mbox{as } N\to \infty.
$$
\item If $\mathfrak{h}<\frac 12$ and $\beta> f(\mathfrak{h})$, define
\be \label{eq:rhopm1}
\varrho_j^{(k)}(\sigma_1, \ldots, \sigma_k):= \prod_{i=1}^k\frac{\exp\left(\sigma_i\left(\sqrt \beta y_j
\beta h_i \right)\right)}
{2\cosh\left(\sqrt \beta y_j \beta h_i \right)}
\ee
for $j\in \{-1,+1\}$ and $y_1$ and $y_{-1}$ defined as in item 4) above.

Then, for almost every realization of the $(h_i)_{i \in \N}$ and every $N$
there is a random
index $J= J_N^{(k)}(h_1,\ldots, h_N)\in \{0, \ldots, s\}$ such that
for all $\sigma_1, \ldots, \sigma_k \in \{-1,+1\}$
%for all $\sigma_1, \ldots, \sigma_k \in \{-1,+1\}$
\begin{equation}\label{eq:claim1}
\left|\mu_{N,k}(\sigma_1, \ldots, \sigma_k)-\varrho^{(k)}_{J_N^{(k)}}(\sigma_1, \ldots, \sigma_k)\right| \to 0
\end{equation}
as well as
\begin{equation}\label{eq:claim2}
H\left(\mu_{N,k} \vert \varrho^{(k)}_{J_N^{(k)}}\right) \to 0 \qquad \text{ as } N \to \infty,
\end{equation}
%\be
%H\left(\mu_{N,k} \vert \varrho^{(k)}_{J_N^{(k)}}\right) \to 0 \qquad \text{ as } N \to \infty.
%\ee
where the
convergence is in the sense of convergence in probability.

\item If $\beta= f(\mathfrak{h})$ and  $\mathfrak{h} > \frac 2 3 \mathrm{arcosh}\left( \sqrt{\frac32}\right)$ define
$\varrho_0^{(k)}$ and $\varrho_{\pm 1}^{(k)}$ as in \eqref{eq:rho0} and \eqref{eq:rhopm1}, respectively. Then, again,
for almost every realization of the $(h_i)_{i \in \N}$ and every $N$
there is a random
index $J = J_N^{(k)}(h_1,\ldots, h_N)\in \{0, \ldots, s\}$ such that
%for all $\sigma_1, \ldots, \sigma_k \in \{-1,+1\}$
%$$
%\left|\mu_{N,k}(\sigma_1, \ldots, \sigma_k)-\varrho^{(k)}_{J_N^{(k)}}(\sigma_1, \ldots, \sigma_k)\right| \to 0
%$$
%as well as
%\be
%H\left(\mu_{N,k} \vert \varrho^{(k)}_{J_N^{(k)}}\right) \to 0 \qquad \text{ as } N \to \infty.
%\ee
\eqref{eq:claim1} and~\eqref{eq:claim2} hold, 
where again, the
convergence is in the sense of convergence in probability.
\end{enumerate}
\end{corollary}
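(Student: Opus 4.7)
My plan is a straightforward case-by-case application of Theorems~\ref{theo:main1}, \ref{theo:main1b}, and \ref{theo:main2}, taking as input the classification of global maxima of $G$ as a function of $(\mathfrak{h},\beta)$ recalled at the beginning of this section (see \cite{AGZ92}). The integrability hypothesis $\E_h h_i^2 < \infty$ is automatic since $|h_i| = \mathfrak{h}$, and the extra condition $\E_h h_i^2 \neq 0$ demanded by Theorem~\ref{theo:main2} holds because in parts~(2) and~(3) we are in the regime $\mathfrak{h} > 0$.

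For part~(1), in each of the three listed subcases $G$ has a unique global maximum at $y_0 = 0$. In the first two subcases this maximum is non-degenerate, so Theorem~\ref{theo:main1} applies, and substituting $y_0 = 0$ into~\eqref{eq:defrho} gives precisely~\eqref{eq:rho0}. In the third subcase ($\beta = f(\mathfrak{h})$ with $\mathfrak{h} \leq \frac{2}{3}\mathrm{arcosh}\sqrt{3/2}$) the maximum is $n$-degenerate, with $n = 2$ along the critical curve and $n = 3$ at the tri-critical endpoint; Theorem~\ref{theo:main1b} then delivers the same limiting measure $\varrho_0^{(k)}$.

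For parts~(2) and~(3), I would invoke Theorem~\ref{theo:main2}. In part~(2), $G$ has exactly two non-degenerate global maxima $y_{+1} = -y_{-1}$, so the theorem applies with $s = 1$ (with the index set $\{0,1\}$ relabelled as $\{-1,+1\}$ to match the notation of~\eqref{eq:rhopm1}). In part~(3), $G$ has three non-degenerate global maxima $y_0 = 0$ and $y_{\pm 1}$, and Theorem~\ref{theo:main2} applies directly with $s = 2$; the three resulting product measures coincide with $\varrho_0^{(k)}$ and $\varrho_{\pm 1}^{(k)}$ as defined in~\eqref{eq:rho0} and~\eqref{eq:rhopm1}. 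In both parts the random index $J_N^{(k)}$ and the limits~\eqref{eq:claim1}--\eqref{eq:claim2} are exactly what the general theorem produces, and the convergence in probability is inherited.

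Since all the heavy analytical work is contained in Section~3, I do not expect any serious obstacle. The only point requiring a moment of care is to verify, at the critical line $\beta = f(\mathfrak{h})$, which of the two theorems applies: Theorem~\ref{theo:main1b} when $\mathfrak{h}$ lies below (or at) the tri-critical value and Theorem~\ref{theo:main2} when it lies strictly above, both cases being dictated unambiguously by the recalled classification of the maxima of $G$.
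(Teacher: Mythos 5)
Your proposal is correct and follows essentially the same route as the paper, which deduces the corollary simply by feeding the recalled classification of the global maxima of $G$ (from \cite{AGZ92}) into Theorems~\ref{theo:main1}, \ref{theo:main1b} and \ref{theo:main2}, with exactly the case assignments you make ($n=2$ on the critical line, $n=3$ at the tri-critical point, and two resp.\ three non-degenerate maxima in parts (2) and (3), where $\mathfrak{h}>0$ guarantees $\E h_i^2\neq 0$). The only minor remark is that the non-degeneracy of the maximum at $y=0$ in the subcase $\mathfrak{h}\ge\tfrac12$ is not explicitly recalled in the paper, but this does not affect your argument since Theorem~\ref{theo:main1b} would cover a degenerate unique maximum as well.
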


%\bibliographystyle{abbrv}
%\bibliography{LiteraturDatenbank}
\end{document}